\documentclass[10pt,reqno]{amsart}
\usepackage[hypertex]{hyperref}

\setlength{\textheight}{8.8in}
\setlength{\topmargin}{-.1in}
\setlength{\textwidth}{6in}
\setlength{\oddsidemargin}{.26in}
\setlength{\evensidemargin}{.26in}
\parskip=.08in

\usepackage{amsmath,amsthm}
\usepackage{amssymb}
\usepackage{euscript}
\usepackage[frame,ps,matrix,arrow,curve,rotate,all,2cell,tips]{xy}
\usepackage{epic,eepic}\setlength{\unitlength}{.35mm}

\numberwithin{equation}{subsection}

\newcommand{\sqsp}{\renewcommand{\baselinestretch}{1.2}\tiny\normalsize}

\raggedbottom
\tolerance=9000
\hbadness=10000
\hfuzz=1.5pt
\setcounter{secnumdepth}{3}
\setcounter{tocdepth}{1}




\newtheorem{theorem}[subsection]{Theorem}
\newtheorem{lemma}[subsection]{Lemma}
\newtheorem{proposition}[subsection]{Proposition}
\newtheorem{corollary}[subsection]{Corollary}


\theoremstyle{definition}
\newtheorem{definition}[subsection]{Definition}
\newtheorem{example}[subsection]{Example}

\newcommand{\bk}{\mathbf{k}}

\newcommand{\bZ}{\mathbf{Z}}
\newcommand{\unit}{\mathbf{1}}

\newcommand{\cS}{\mathcal{S}}

\newcommand{\fC}{\mathfrak{C}}
\newcommand{\sltwo}{\mathfrak{sl}_2}

\newcommand{\ainfty}{\mathsf{A}_{\infty}}
\newcommand{\As}{\mathsf{As}}
\newcommand{\Bi}{\mathsf{Bi}}
\newcommand{\sE}{\mathsf{E}}
\newcommand{\linfty}{\mathsf{L}_{\infty}}
\newcommand{\Nam}{\mathsf{Nam}}
\newcommand{\sO}{\mathsf{O}}
\newcommand{\sP}{\mathsf{P}}
\newcommand{\sQ}{\mathsf{Q}}
\newcommand{\ugr}{\mathsf{UGr}}
\newcommand{\YBE}{\mathsf{YBE}}

\newcommand{\ybar}{\overline{y}}

\newcommand{\andspace}{\quad\text{and}\quad}
\newcommand{\bigo}{\bigodot}
\newcommand{\bracket}{[...]}

\newcommand{\profilev}{(|out(v)|,|in(v)|)}
\newcommand{\nicearrow}{\SelectTips{cm}{10}}
\newcommand{\signsigma}{(-1)^{\epsilon(\sigma)}}

\DeclareMathOperator{\Hom}{Hom}
\DeclareMathOperator{\Aut}{Aut}


\newcommand{\corolla}{{\unitlength=1mm
\begin{picture}(13,17)(-2,-2)
\put(5,5){\makebox(0,0){$\bullet$}}
\put(0,0){\vector(1,1){4.5}}
\put(10,0){\vector(-1,1){4.5}}
\put(5,5){\vector(1,1){5}}
\put(5,5){\vector(-1,1){5}}
\put(5,1){\makebox(0,0){...}}
\put(5,9){\makebox(0,0){...}}
\end{picture}}}

\newcommand{\gcorolla}{{\unitlength=.6mm
\begin{picture}(40,55)(-20,-15)
\put(0,0){\makebox(0,0){$\bullet$}}
\put(0,20){\makebox(0,0){$\bullet$}}
\put(0,20){\vector(1,1){10}}
\put(0,20){\vector(-1,1){10}}
\put(-10,-10){\vector(1,1){10}}
\put(10,-10){\vector(-1,1){10}}
\put(0,27){\makebox(0,0){...}}
\put(0,-8){\makebox(0,0){...}}
\put(0,10){\makebox(0,0){...}}
\put(-10,20){\makebox(0,0){$w$}}
\put(-10,0){\makebox(0,0){$v$}}
\qbezier(0,0)(-12,8)(-2,18)
\put(-2,18){\vector(1,1){2}}
\qbezier(0,0)(12,8)(2,18)
\put(2,18){\vector(-1,1){2}}
\end{picture}}}

\begin{document}

\title{Hom-algebras via PROPs}
\author{Donald Yau}

\begin{abstract}
Hom-algebras over a PROP are defined and studied.  Several twisting constructions for Hom-algebras over a large class of PROPs are proved, generalizing many such results in the literature.  Partial classification of Hom-algebras over a PROP is obtained.
\end{abstract}

\keywords{Hom-algebra, PROP}

\subjclass[2000]{17A30, 18D50}

\address{Department of Mathematics\\
    The Ohio State University at Newark\\
    1179 University Drive\\
    Newark, OH 43055, USA}
\email{dyau@math.ohio-state.edu}

\date{\today}
\maketitle


\sqsp

\section{Introduction}

A \emph{Hom-Lie algebra} is a generalization of a Lie algebra in which the anti-symmetric bracket satisfies the Hom-Jacobi identity,
\[
[[x,y],\alpha(z)] + [[z,x],\alpha(y)] + [[y,z],\alpha(x)] = 0,
\]
in which $\alpha$ is a linear self-map, called the twisting map.  Lie algebras are Hom-Lie algebras with identity twisting map.  Hom-Lie algebras were introduced in \cite{hls} to describe certain deformations of the Witt and the Virasoro algebras.  Hom-type generalizations of many classical algebraic structures have been studied, including Hom-associative algebras \cite{ms1,ms3,ms4,yau1,yau2}, $n$-ary Hom-Nambu algebras \cite{ams1,ams2,ams3,yau7,yau8,yau9}, Hom-Novikov algebras \cite{yau5}, Hom-bialgebras \cite{ms2,yau4}, and the Hom-Yang-Baxter equation \cite{yau3,yau6}, among many others.

There are two common themes among all these different types of Hom-algebras, namely, their definitions and twisting construction results.  Let us discuss each item.

A Hom-type generalization of a given kind of algebras is defined by strategically replacing the identity map in the defining axioms by general twisting maps. For example, the Jacobi identity involves iterated composition of the form $[[x,y],z]$.  In the Hom-Jacobi identity, this iterated composition is replaced by $[[x,y],\alpha(z)]$.  Conceptually one can think of the twisting map $\alpha$ as balancing the number of operations applied to the elements involved.  Indeed, in $[[x,y],z]$ both $x$ and $y$ are act upon twice by the bracket, whereas $z$ is only act upon once by the bracket.  In the Hom-type analogue $[[x,y],\alpha(z)]$, the element $z$ is now act upon twice, once by each of the twisting map and the bracket.

Another common theme among different types of Hom-algebras is that they can be constructed from classical algebras by twisting along an endomorphism.  The author first observed in \cite{yau2} that if $A$ is a $G$-associative algebra \cite{gr} (such as an associative or a Lie algebra) with multiplication $\mu$ and if $\alpha \colon A \to A$ is an algebra morphism, then $A$ becomes a $G$-Hom-associative algebra with multiplication $\alpha \circ \mu$ and twisting map $\alpha$.  Many examples of Hom-algebras can be obtained using this twisting construction.  See, for example, \cite{yau2,yau5,yau6} for Hom-associative, Hom-Lie, and Hom-Novikov algebras constructed using this method.  This twisting construction has been adapted to most other kinds of Hom-algebras and is the main tool in constructing examples of Hom-algebras.

From the above discussion, it is natural to ask if there is a general framework that underlies both the definitions and the twisting construction results for different kinds of Hom-algebras.  The purpose of this paper is to describe such a general framework for defining and studying Hom-algebras.  Before we describe the content of this paper, let us briefly discuss what this general framework is.

To describe different types of algebras and their Hom-type analogues, we use Mac Lane's general machinery of PROPs \cite{maclane}.  The name PROP stands for \emph{product and permutation category}.  Briefly, a PROP is a bigraded module with two operations that satisfy some axioms.  There are algebras over any given PROP $\sP$ with various operations coming from $\sP$.  To obtain a particular kind of algebras, one specifies an appropriate PROP.   For example, if $\sP$ is the bialgebra PROP, then $\sP$-algebras are precisely bialgebras.  There are other PROPs for $n$-ary Nambu algebras, solutions of the Yang-Baxter equation, and so forth.

There are several advantages of the general PROP setting.  First, every result about PROPs applies to many different kinds of algebras by specifying to particular PROPs.  Second, PROPs can describe algebraic structures with multiple inputs and multiple outputs simultaneously.  This is necessary, for example, in the case of (Hom-)bialgebras and the (Hom-)Yang-Baxter equation.  Third, as general as a PROP is, the definition of a PROP is quite simple.  Once the notations are established, it is not much more difficult to work with PROPs than with, say, bialgebras.

The rest of this paper is organized as follows.  In the next section, the definitions of a PROP, an algebra over a PROP, and free PROPs are recalled.  In section \ref{sec:homalgebra} Hom-algebras over an arbitrary PROP are defined.  There are actually many different kinds of Hom-algebras over a given PROP.  This variety of Hom-algebras arises from the fact that there are choices as to which identity maps are replaced by twisting maps and how many twisting maps are used. In section \ref{sec:examples} some specific types of Hom-algebras over PROPs are discussed.

In section \ref{sec:twit} several construction results and partial classification for Hom-algebras are proved.  In Theorem \ref{thm:twist} it is shown that for a large class of PROPs, called normal PROPs, every Hom-algebra gives rise to another one whose structure maps are twisted by an endomorphism.  Most of the twisting constructions for Hom-algebras in the literature are special instances of Theorem \ref{thm:twist}.  There are several useful special cases of this general twisting result.  First, over a normal PROP $\sP$, every multiplicative Hom-$\sP$-algebra yields a sequence of multiplicative Hom-$\sP$-algebras whose structure maps are twisted repeatedly by its own twisting map (Corollary \ref{cor1:twist}).  On the other hand, starting with a $\sP$-algebra and an endomorphism $\beta$, the twisting result yields a Hom-$\sP$-algebra whose structure maps are twisted by $\beta$ and whose twisting maps are all given by $\beta$ (Corollaries \ref{cor2:twist} and \ref{cor3:twist}).  Hom-$\sP$-algebras arising from $\sP$-algebras via this twisting construction are partially classified in Corollaries \ref{cor:classification} and \ref{cor2:classification}.

\section{PROPs and algebras}
\label{sec:propalgebra}

In this preliminary section, we recall some basic definitions and facts regarding PROPs and their algebras.  The references for most of this section are \cite{maclane,markl1}.  There are several equivalent characterizations of PROPs, but in this paper we only use one of them.  There are more conceptual descriptions of PROPs as algebras over certain monads \cite{fmy,markl1} or as $2$-monoids \cite{jy}.

\subsection{Conventions}

The cardinality of a set $S$ is denoted by $|S|$.  Throughout this paper we work over a ground field $\bk$ of characteristic $0$.  Our definitions and results are stated for $\bk$-modules.  Their obvious analogues for $\bZ$-graded and differential graded $\bk$-modules are also valid.  Moreover, all the results remain true if we work over the semi-simple algebra $\bigoplus_{\fC} \bk$ for some non-empty set $\fC$.


Let us first recall the underlying object of a PROP.  Let $\Sigma_n$ denote the symmetric group on $n$ letters.

\begin{definition}
A \emph{$\Sigma$-bimodule} is a bigraded $\bk$-module $P=\{P(n,m)\}_{n,m \geq 0}$ such that $P(n,m)$ is equipped with a left $\Sigma_n$-action and a right $\Sigma_m$-action that commute with each other.  An element in $P(n,m)$ is said to have \emph{biarity $(n,m)$}.  A \emph{morphism} $f \colon P \to Q$ of $\Sigma$-bimodules is a bigraded collection $f = \{f(n,m) \colon P(n,m) \to Q(n,m)\}$ of bi-equivariant maps.
\end{definition}


\begin{definition}
\label{def:prop}
A \emph{PROP} is a $\Sigma$-bimodule $\sP = \{\sP(n,m)\}$ that is equipped with
\begin{enumerate}
\item
a \emph{horizontal composition}
\begin{equation}
\label{hcomp}
\otimes \colon \sP(n,m) \otimes \sP(q,p) \to \sP(n+q,m+p),
\end{equation}
\item
a \emph{vertical composition}
\begin{equation}
\label{vcomp}
\circ \colon \sP(n,m) \otimes \sP(m,l) \to \sP(n,l),
\end{equation}
and
\item
a \emph{unit element} $\unit \in \sP(1,1)$,
\end{enumerate}
satisfying the following axioms.
\begin{enumerate}
\item
The horizontal and the vertical compositions are associative and (bi-)equivariant in the obvious sense.
\item
The \emph{interchange rule}
\begin{equation}
\label{interchange}
(a \otimes b) \circ (c \otimes d) = (a \circ c) \otimes (b \circ d)
\end{equation}
holds for $a,b,c,d \in \sP$ whenever the compositions all make sense.
\item
The unit element is a two-sided identity for the vertical composition, in the sense that
\[
a = a \circ \unit^{\otimes m} = \unit^{\otimes n} \circ a
\]
for all $a \in \sP(n,m)$.
\end{enumerate}
A \emph{morphism} of PROPs is a morphism of the underlying $\Sigma$-bimodules that preserves the horizontal compositions, the vertical compositions, and the unit elements.  An \emph{ideal} of a PROP is defined as a sub-$\Sigma$-bimodule with the usual closure properties with respect to both the horizontal and the vertical compositions.
\end{definition}

\begin{example}
Let $\fC$ be a non-empty set.  If we use modules over the semi-simple algebra $\bigoplus_{\fC} \bk$ instead of $\bk$, then the above definitions give a $\fC$-colored $\Sigma$-bimodule and a $\fC$-colored PROP \cite{fmy,jy}, respectively.
\end{example}

\begin{example}
\label{ex:operad}
If $\sP$ is a PROP, then
\[
U\sP = \{\sP(1,m)\}_{m \geq 0}
\]
is naturally an operad \cite{may}, in which the operad structure map is an iterated horizontal composition followed by a vertical composition in $\sP$.  The forgetful functor $U$ admits a left adjoint, which sends an operad $\sO$ to the free PROP $\sO_{prop}$ generated by it.  Moreover, the categories of algebras of $\sO$ and $\sO_{prop}$ (Definition \ref{def:propalgebra} below) are naturally equivalent \cite{jy} (section 9).  In particular, every result below about PROPs applies to operads as well.
\end{example}

\begin{example}
\label{ex:endprop}
The \emph{endomorphism PROP} of a $\bk$-module $V$ is the PROP $\sE_V$ specified as follows:
\begin{enumerate}
\item
The components of $\sE_V$ are
\[
\sE_V(n,m) = \Hom(V^{\otimes m}, V^{\otimes n}),
\]
with the $\Sigma_n$-$\Sigma_m$ action given by permutations of tensor factors.
\item
The horizontal composition \eqref{hcomp} on $\sE_V$ is given by tensor product of morphisms.
\item
The vertical composition \eqref{vcomp} on $\sE_V$ is given by the usual composition of morphisms.
\item
The unit element in $\sE_V$ is the identity map on $V$.
\end{enumerate}
\end{example}

\begin{definition}
\label{def:propalgebra}
Let $\sP$ be a PROP.  A \emph{$\sP$-algebra} is a morphism $\lambda \colon \sP \to \sE_V$ of PROPs, where $\sE_V$ is the endomorphism PROP of a $\bk$-module $V$ (Example \ref{ex:endprop}).  In this case, we also call $V$ a $\sP$-algebra, or an algebra over $\sP$, with structure map $\lambda$.
\end{definition}

By adjunction the structure map $\lambda$ of a $\sP$-algebra $V$ is equivalent to a collection of maps
\[
\left\{\lambda \colon \sP(n,m) \otimes V^{\otimes m} \to V^{\otimes n}\right\}_{m,n \geq 0}
\]
that are compatible with the horizontal and the vertical compositions, the unit elements, and the $\Sigma_n$-$\Sigma_m$ actions.  For $x \in \sP(n,m)$ we write
\[
\lambda(x) \colon V^{\otimes m} \to V^{\otimes n}
\]
for the map $\lambda(x,-)$.

\begin{definition}
\label{def:propalgmorphism}
Let $\sP$ be a PROP and $\lambda \colon \sP \to \sE_V$ and $\rho \colon \sP \to \sE_W$ be $\sP$-algebras.  Then a \emph{morphism of $\sP$-algebras} $f \colon V \to W$ is a linear map $f \colon V \to W$ of $\bk$-modules such that the square
\[
\nicearrow
\xymatrix{
V^{\otimes m} \ar[r]^{\lambda(x)} \ar[d]_{f^{\otimes m}} & V^{\otimes n} \ar[d]^{f^{\otimes n}}\\
W^{\otimes m} \ar[r]^{\rho(x)} & W^{\otimes n}
}
\]
commutes for all $x \in \sP(n,m)$.
\end{definition}

Next we discuss free PROPs. 
Some preliminary discussion of graphs is needed.

\subsection{Directed graphs}

A \emph{directed $(n,m)$-graph} \cite{markl1} is a finite, not-necessarily connected, directed graph $G$ such that:
\begin{enumerate}
\item
$G$ does not have directed cycles.
\item
At each vertex $v$ in $G$, the set $in(v)$ of incoming edges and the set $out(v)$ of outgoing edges are separately labeled.
\item
$G$ has $m$ inputs, which are edges without an initial vertex, and $n$ outputs, which are edges without a terminal vertex.  The sets $in(G)$ and $out(G)$ of in/outputs of $G$ are separately labeled.
\end{enumerate}
The exceptional graph
\[
\overbrace{\underbrace{\uparrow \uparrow \cdots \uparrow \uparrow}_{\text{$n$ inputs}}}^{\text{$n$ outputs}}
\]
with $n$ inputs, $n$ outputs, and no vertex is also allowed.  An \emph{isomorphism} of directed $(n,m)$-graphs is a bijection between the sets of edges and vertices that preserves all the labels and incidence relations at each vertex.  The category of directed $(n,m)$-graphs and their isomorphisms is denoted by $\ugr(n,m)$.

\subsection{Corollas}
Define the \emph{$(n,m)$-corolla} as the directed $(n,m)$-graph $C_{n,m}$  with one vertex and no edges other than the $m$ inputs and the $n$ outputs.  The labels at the unique vertex coincide with the labels of the in/outputs.  The corolla $C_{n,m}$ can be graphically represented as:
\[
\corolla
\]
The bottom $m$ arrows are the inputs, and the top $n$ arrows are the outputs.


\subsection{Decorated graphs}

Let $X$ be a $\Sigma$-bimodule and $G$ be a directed $(n,m)$-graph.  Define the object
\begin{equation}
\label{decoratedgraph}
X(G) = \bigotimes_{v \in G} X(|out(v)|,|in(v)|),
\end{equation}
in which the tensor product is taken over the set of vertices in $G$.  A typical element in $X(G)$ is denoted by
\begin{equation}
\label{xv}
\otimes_{v \in G} x_v
\end{equation}
with $x_v \in X\profilev$ and is called a \emph{decorated $(n,m)$-graph}.

\subsection{Free PROPs}

Let $X$ be a $\Sigma$-bimodule.  Define the $\Sigma$-bimodule $FX$ with
\begin{equation}
\label{freeprop}
FX(n,m) = \bigoplus_{[G] \in \ugr(n,m)} X(G)_{Aut(G)}.
\end{equation}
Here the direct sum is taken over the isomorphism classes of directed $(n,m)$-graphs, and $X(G)_{Aut(G)}$ is the module of coinvariants of $X(G)$ under the natural $Aut(G)$-action.  The $\Sigma_n$-$\Sigma_m$ action on $FX(n,m)$ comes from permutations of the output/input labels of $G \in \ugr(n,m)$.  Therefore, elements in $FX$ are (represented by) decorated graphs \eqref{xv}.

There is a natural PROP structure on $FX$:
\begin{enumerate}
\item
The horizontal composition \eqref{hcomp} on $FX$ is induced by disjoint unions of directed graphs.
\item
The vertical composition \eqref{vcomp} on $FX$ is induced by grafting of directed graphs, where the outputs of one are connected to the inputs of the other.
\item
The unit element $\unit \in FX$ is the element $1 \in \bk = X(\uparrow) \subseteq FX(1,1)$.
\end{enumerate}
This PROP $FX$ is called the \emph{free PROP generated by $X$}.  The functor $F$ is the left adjoint of the forgetful functor from PROPs to $\Sigma$-bimodules.

There is a natural map $X \to FX$ of $\Sigma$-bimodules given by
\begin{equation}
\label{corinclusion}
X(n,m) = X(C_{n,m}) \xrightarrow{\text{natural}} FX(n,m).
\end{equation}
We call this map the \emph{corolla inclusion}.  Using the corolla inclusion, each element $x \in X(n,m)$ will also be regarded as an element in $FX(n,m)$.

Each element in $FX$ can be written (not-necessarily uniquely) as a finite sum
\begin{equation}
\label{fxelement}
\sum \pm y_{i_1}\cdots y_{i_k},
\end{equation}
in which each $y_i$ is in $X$, the unit element $\unit \in FX$, or a non-identity permutation.  Each monomial $y_{i_1}\cdots y_{i_k}$ is parenthesized such that each product is either the horizontal composition or the vertical composition in $FX$.

\subsection{PROPs via generators and relations}
\label{genrel}

Let $X$ be a $\Sigma$-bimodule and $R$ be a subset of the free PROP $FX$.  Then the quotient PROP
\begin{equation}
\label{fxj}
\frac{FX}{\langle R \rangle}
\end{equation}
is said to have \emph{generators} $X$ and \emph{generating relations} $R$.  Here $\langle R \rangle$ is the ideal in $FX$ generated by $R$.  For an element $x \in X$, its image under the composition
\[
X \xrightarrow{\text{natural}} FX \xrightarrow{\text{quotient}} FX/\langle R \rangle
\]
is also denoted by $x$.

Let $\sP$ be the PROP $FX/\langle R \rangle$ and $V$ be a $\sP$-algebra with structure map $\lambda \colon \sP \to \sE_V$ (Definition \ref{def:propalgebra}).  The structure map $\lambda$ induces a map
\[
FX \xrightarrow{\text{quotient}} \sP \xrightarrow{\lambda} \sE_V
\]
of PROPs, which is also denoted by $\lambda$, whose restriction to $R$ is $0$.  By the free-forgetful adjunction, the map $\lambda \colon FX \to \sE_V$ of PROPs is determined by the restriction
\[
\lambda \colon X \to \sE_V,
\]
which is a map of $\Sigma$-bimodules.  Using the Hom-tensor adjunction, we have generating operations
\[
\lambda(x) \colon V^{\otimes m} \to V^{\otimes n}
\]
for $x \in X(n,m)$.  Moreover, if $\sum \pm y_{i_1}\cdots y_{i_k} \in R$ as in \eqref{fxelement}, then we have the generating relation
\[
\sum \pm \lambda(y_{i_1}) \cdots \lambda(y_{i_k}) = 0
\]
in the endomorphism PROP $\sE_V$.

\section{Hom-algebras of PROPs}
\label{sec:homalgebra}

In this section, we define (multiplicative) Hom-algebras over a PROP and establish some of their basic properties.

\subsection{Setting}
\label{propsetting}

Let $X$ be a $\Sigma$-bimodule and $R$ be a subset of the free PROP $FX$.  Throughout this section,
\[
\sP = \frac{FX}{\langle R \rangle}
\]
denotes the PROP with generators $X$ and generating relations $R$ as in \eqref{fxj}.  Assume that for each element in $R$, a specific representation as a finite sum
\[
\sum \pm y_{i_1}\cdots y_{i_k}
\]
as in \eqref{fxelement} is already chosen.

\subsection{PROPs for multiplicative Hom-algebras}
\label{mhomhalg}

Define the $\Sigma$-bimodule
\[
X_{mh} = X \oplus \bk\langle \alpha \rangle.
\]
The subscript $mh$ refers to \emph{multiplicative Hom}.  The elements $\alpha$ is in $X_{mh}(1,1)$.  The symbol $\bk \langle \alpha \rangle$ denotes the $\Sigma$-bimodule generated by the element $\alpha$ of biarity $(1,1)$.

Define the subset $R_{mh} \subseteq FX_{mh}$ as consisting of the following two types of elements:
\begin{enumerate}
\item
For each $x \in X(n,m)$, we have
\[
x \circ \alpha^{\otimes m} - \alpha^{\otimes n} \circ x \in R_{mh},
\]
where $x$ and $\alpha$ are regarded as elements in $FX_{mh}$ via the corolla inclusion \eqref{corinclusion}.
\item
If $\sum \pm y_{i_1}\cdots y_{i_k} \in R$, then
\[
\sum \pm \ybar_{i_1}\cdots \ybar_{i_k} \in R_{mh},
\]
where
\[
\ybar_i =
\begin{cases}
y_i & \text{if $y_i \not= \unit$},\\
\alpha & \text{if $y_i = \unit$}.
\end{cases}
\]
\end{enumerate}

\begin{definition}
\label{def:mhomalg}
Algebras over the PROP
\begin{equation}
\label{pmh}
\sP_{mh} = \frac{FX_{mh}}{\langle R_{mh}\rangle}
\end{equation}
are called \emph{multiplicative Hom-$\sP$-algebras}.  A morphism of multiplicative Hom-$\sP$-algebras is a morphism of $\sP_{mh}$-algebras.
\end{definition}

By the discussion in section \ref{genrel}, a multiplicative Hom-$\sP$-algebra $\lambda \colon \sP_{mh} \to \sE_A$ has generating operations:
\[
\begin{split}
\lambda(x) & \colon A^{\otimes m} \to A^{\otimes n} \quad\text{for $x \in X(n,m)$ and}\\
\lambda(\alpha) & \colon A \to A \quad \text{(twisting map)}.
\end{split}
\]
These generating operations satisfy generating relations as specified by $R_{mh}$.  In particular:
\begin{enumerate}
\item
The element $(x \circ \alpha^{\otimes m} - \alpha^{\otimes n} \circ x) \in R_{mh}$ says that the twisting map $\lambda(\alpha)$ is compatible with each generating operation $\lambda(x) \colon A^{\otimes m} \to A^{\otimes n}$ for $x \in X(n,m)$, in the sense that the square
\begin{equation}
\label{alphacompat}
\nicearrow
\xymatrix{
A^{\otimes m} \ar[r]^-{\lambda(x)} \ar[d]_{\lambda(\alpha)^{\otimes m}} & A^{\otimes n} \ar[d]^{\lambda(\alpha)^{\otimes n}}\\
A^{\otimes m} \ar[r]^-{\lambda(x)} & A^{\otimes n}
}
\end{equation}
commutes.
\item
The element $\sum \pm \ybar_{i_1}\cdots \ybar_{i_k} \in R_{mh}$ says that the generating operations satisfy generating relations that are obtained from those of $\sP$-algebras by replacing each occurrence of the identity map by the twisting map $\lambda(\alpha)$.
\end{enumerate}

\subsection{PROPs for Hom-algebras}
\label{prophomalg}

Now we discuss Hom-algebras that are less restrictive than the multiplicative ones by allowing more twisting maps and removing the multiplicativity condition.

Let $I$ be the set of unit elements $\unit$ that appear in the elements $\sum \pm y_{i_1}\cdots y_{i_k} \in R$ (as in \eqref{fxelement}).  We assume that $I$ is non-empty.  If $i \in I$, we sometimes write $\unit_i$ for the corresponding unit element appearing in $R$.  Let $S \subseteq I$ be a non-empty subset and $\theta$ be a partition of $S$ into non-empty subsets.  \emph{The default assumption is that $S = I$, unless otherwise specified.}

Only the unit elements in $S$ are to be replaced by twisting maps, and there is one twisting map for each block in the partition $\theta$.  More precisely, define the $\Sigma$-bimodule
\[
X_{h, \theta} = X \oplus \bk \langle \{\alpha_p\}_{p \in \theta}\rangle,
\]
with each $\alpha_p \in X_h(1,1)$.  The symbol $\bk \langle \{\alpha_p\}_{p \in \theta}\rangle$ denotes the $\Sigma$-bimodule generated by the elements $\alpha_p$ for $p \in \theta$, all having biarity $(1,1)$.

Define the subset $R_{h, \theta} \subseteq FX_{h, \theta}$ as consisting of the following elements: If $\sum \pm y_{i_1}\cdots y_{i_k} \in R$, then
\[
\sum \pm y_{i_1}'\cdots y_{i_k}' \in R_{h, \theta},
\]
where
\[
y_i' =
\begin{cases}
y_i & \text{if either $y_i \not= \unit$ or $y_i = \unit \not\in S$},\\
\alpha_p & \text{if $y_i = \unit \in p \in \theta$}.
\end{cases}
\]

\begin{definition}
\label{def:homalg}
Algebras over the PROP
\begin{equation}
\label{ph}
\sP_{h, \theta} = \frac{FX_{h, \theta}}{\langle R_{h, \theta}\rangle}
\end{equation}
are called \emph{Hom-$\sP$-algebras of type $\theta$}.  A morphism of Hom-$\sP$-algebras of type $\theta$ is a morphism of $\sP_{h,\theta}$-algebras.
\end{definition}


By the discussion in section \ref{genrel}, a Hom-$\sP$-algebra of type $\theta$, $\lambda \colon \sP_{h,\theta} \to \sE_A$, has generating operations:
\[
\begin{split}
\lambda(x) & \colon A^{\otimes m} \to A^{\otimes n} \quad\text{for $x \in X(n,m)$ and}\\
\lambda(\alpha_p) & \colon A \to A \quad \text{for $p \in \theta$ (twisting maps)}.
\end{split}
\]
These generating operations satisfy generating relations as specified by $R_{h, \theta}$.  In particular, generating relations in $A$ are obtained from those of $\sP$-algebras by replacing each identity map in $S$ by a twisting map $\lambda(\alpha_p)$.

\begin{example}
There is a partition $\theta_{max}$ with $S = I$ that consists of one-element subsets in $I$.  In this case, there are as many twisting maps as there are $\unit$ in $R$.
\end{example}

\begin{example}
There is the trivial partition
\begin{equation}
\label{thetamin}
\theta_{min} = \{S\}.
\end{equation}
In this case, there is exactly one twisting map.  In particular, a multiplicative Hom-$\sP$-algebra is exactly a Hom-$\sP$-algebra of type $\theta_{min}$ with $S = I$ in which the twisting map is compatible with the generating operations $\lambda(x) \colon A^{\otimes m} \to A^{\otimes n}$ for all $x \in X(n,m)$.
\end{example}

The following result says, among other things, that $\sP$-algebras can also be regarded as Hom-$\sP$-algebras with identity twisting maps.

\begin{proposition}
\label{hompalg}
Let $\sP = F(X)/\langle R \rangle$ be a PROP, $I$ be the set of unit elements $\unit$ in $R$, $S \subseteq I$ be a non-empty subset, and $\theta$ be a partition of $S$ into non-empty subsets.  Then there are solid-arrow maps of PROPs
\begin{equation}
\label{ppi}
\nicearrow
\xymatrix{
& \sP_{h,\theta} \ar[d]^{\pi} \ar@{.>}[dl]_{\pi_1} \\
\sP_{mh} \ar[r]_{\pi_2} & \sP
}
\end{equation}
determined by
\[
\begin{split}
\pi|_X &= Id,\quad \pi(\alpha_p) = \unit \quad \text{for all $p \in \theta$},\\
\pi_2|_X &= Id,\quad \pi_2(\alpha) = \unit.
\end{split}
\]
If, in addition, $S = I$ then the map $\pi_1$ of PROPs exists such that the triangle \eqref{ppi} commutes, and $\pi_1$ is determined by
\[
\pi_1|_X = Id, \quad \pi_1(\alpha_p) = \alpha \quad \text{for all $p \in \theta$}.
\]
\end{proposition}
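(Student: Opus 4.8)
The plan is to construct all three PROP morphisms by the same two-step recipe and then verify commutativity on generators. Step one: a map of $\Sigma$-bimodules on the generating object extends uniquely to a PROP morphism out of the free PROP (the free--forgetful adjunction recalled in Section~\ref{sec:propalgebra}). Step two: since the preimage of $0$ under a PROP morphism $\phi\colon FX'\to\sQ$ is an ideal, showing $\phi$ kills each chosen relator in $R'$ forces $\phi(\langle R'\rangle)=0$, so $\phi$ descends to $FX'/\langle R'\rangle$. I will apply this recipe three times.

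For $\pi$, define the $\Sigma$-bimodule map $X_{h,\theta}=X\oplus\bk\langle\{\alpha_p\}_{p\in\theta}\rangle\to\sP$ to be $X\hookrightarrow FX\twoheadrightarrow\sP$ on the summand $X$ and to send each $\alpha_p$ to $\unit\in\sP(1,1)$, and extend to $\widetilde\pi\colon FX_{h,\theta}\to\sP$. Given a relator $\sum\pm y_{i_1}'\cdots y_{i_k}'\in R_{h,\theta}$ coming from $\sum\pm y_{i_1}\cdots y_{i_k}\in R$, I would check in each of the three cases (namely $y_i\neq\unit$, $y_i=\unit\notin S$, and $y_i=\unit\in p\in\theta$) that $\widetilde\pi(y_i')$ equals the image of $y_i$ in $\sP$; hence $\widetilde\pi$ carries the relator to the image of $\sum\pm y_{i_1}\cdots y_{i_k}\in R$, which is $0$ in $\sP$, and $\widetilde\pi$ descends to $\pi$. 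For $\pi_2$, define $X_{mh}=X\oplus\bk\langle\alpha\rangle\to\sP$ by $X\hookrightarrow FX\twoheadrightarrow\sP$ and $\alpha\mapsto\unit$, and extend to $\widetilde{\pi_2}\colon FX_{mh}\to\sP$. On a compatibility relator $x\circ\alpha^{\otimes m}-\alpha^{\otimes n}\circ x$ one uses the unit axiom of Definition~\ref{def:prop} to get $x\circ\unit^{\otimes m}-\unit^{\otimes n}\circ x=x-x=0$; on a relator $\sum\pm\ybar_{i_1}\cdots\ybar_{i_k}$ the same case analysis as before (with $\ybar_i=y_i$ if $y_i\neq\unit$ and $\ybar_i=\alpha\mapsto\unit$) shows the image is that of $\sum\pm y_{i_1}\cdots y_{i_k}\in R$, hence $0$; so $\widetilde{\pi_2}$ descends to $\pi_2$.

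For $\pi_1$, assume $S=I$ and define $X_{h,\theta}\to\sP_{mh}$ by $X\hookrightarrow FX_{mh}\twoheadrightarrow\sP_{mh}$ and $\alpha_p\mapsto\alpha\in\sP_{mh}(1,1)$ for every $p\in\theta$, extending to $\widetilde{\pi_1}\colon FX_{h,\theta}\to\sP_{mh}$. This is the one place where the hypothesis $S=I$ is genuinely used, and I expect it to be the main (mild) subtlety of the proof: because $S=I$, every $\unit$ occurring in a relator of $R$ lies in some block $p\in\theta$, so a relator $\sum\pm y_{i_1}'\cdots y_{i_k}'\in R_{h,\theta}$ has $y_i'=y_i$ when $y_i\neq\unit$ and $y_i'=\alpha_p$ when $y_i=\unit$; applying $\widetilde{\pi_1}$ then replaces every $y_i=\unit$ by $\alpha$, so the image is exactly $\sum\pm\ybar_{i_1}\cdots\ybar_{i_k}$, a chosen relator of $R_{mh}$, hence $0$ in $\sP_{mh}$. (Were $S\subsetneq I$, a $\unit$ surviving in $R_{h,\theta}$ would map to $\unit$ rather than $\alpha$, which need not be a relator of $R_{mh}$, so $\pi_1$ would fail to be well defined.) Thus $\widetilde{\pi_1}$ descends to $\pi_1$.

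Finally, for commutativity of the triangle in \eqref{ppi}: both $\pi_2\circ\pi_1$ and $\pi$ are PROP morphisms out of $\sP_{h,\theta}=FX_{h,\theta}/\langle R_{h,\theta}\rangle$, hence are determined by their restrictions to the generators $X_{h,\theta}$; on $X$ both restrict to the canonical map $X\to\sP$, and on each $\alpha_p$ we have $\pi_2(\pi_1(\alpha_p))=\pi_2(\alpha)=\unit=\pi(\alpha_p)$, so the two morphisms agree. Everything except the bookkeeping of units in the $S=I$ step is a direct application of the free--forgetful adjunction and the fact that kernels of PROP morphisms are ideals.
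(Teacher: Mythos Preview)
Your proposal is correct and follows essentially the same approach as the paper: build each map on generators, extend by the free--forgetful adjunction, and check that the chosen relators are sent to zero so the map descends to the quotient. The only cosmetic difference is that the paper routes the map through $FX$ (defining $\eta\colon X_{h,\theta}\to FX$ and composing with the quotient $FX\to\sP$) while you map directly into $\sP$; your treatment is in fact more complete, since you spell out the argument for all three maps, verify the unit-axiom computation for the multiplicativity relators in $\pi_2$, explain explicitly why $S=I$ is needed for $\pi_1$, and check commutativity of the triangle on generators, whereas the paper handles only $\pi$ in detail and dismisses the rest as ``essentially the same.''
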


\begin{proof}
Consider the map of $\Sigma$-bimodules
\[
\eta \colon X_{h,\theta} = X \oplus \bk \langle \{\alpha_p\}_{p \in \theta}\rangle \to FX
\]
determined by
\[
\eta(x) = x \quad\text{for all $x \in X$},\andspace
\eta(\alpha_p) = \unit \quad\text{for all $p \in \theta$}.
\]
The adjoint of $\eta$ is a map of PROPs
\[
\eta' \colon FX_{h,\theta} \to FX.
\]
Consider the solid-arrow diagram of PROPs
\[
\nicearrow
\xymatrix{
FX_{h,\theta} \ar[r]^-{\eta'} \ar[d]_{q_1} & FX \ar[d]^{q_2}\\
\sP_{h,\theta} \ar@{.>}[r]^-{\pi} & \sP,
}
\]
in which each $q_i$ denotes the quotient map.  The image of $R_{h,\theta}$ under $\eta'$ is exactly $R$, which is annihilated by $q_2$.  Therefore, the composition $q_2 \circ \eta'$ factors through $q_1$, yielding the dotted-arrow $\pi$ with the desired properties.

The assertions regarding $\pi_1$ and $\pi_2$ are proved by essentially the same arguments as in  the previous paragraph.
\end{proof}

Proposition \ref{hompalg} can be interpreted as follows.
\begin{enumerate}
\item
Suppose $\lambda \colon \sP \to \sE_V$ is a $\sP$-algebra (Definition \ref{def:propalgebra}). Then for an arbitrary partition $\theta$ of $S$, the composition
\[
\sP_{h,\theta} \xrightarrow{\pi} \sP \xrightarrow{\lambda} \sE_V
\]
gives $V$ the structure of a Hom-$\sP$-algebra of type $\theta$, in which the twisting maps $\alpha_p$ for $p \in \theta$ are all equal to the identity map on $V$.
\item
Likewise, the composition
\[
\sP_{mh} \xrightarrow{\pi_2} \sP \xrightarrow{\lambda} \sE_V
\]
gives $V$ the structure of a multiplicative Hom-$\sP$-algebra, in which the twisting map $\alpha$ is the identity map on $V$.
\item
Suppose $S = I$, $\theta$ is a partition of $I$, and $\rho \colon \sP_{mh} \to \sE_V$ is a multiplicative Hom-$\sP$-algebra.  Then the composition
\[
\sP_{h,\theta} \xrightarrow{\pi_1} \sP_{mh} \xrightarrow{\rho} \sE_V
\]
gives $V$ the structure of a Hom-$\sP$-algebra of type $\theta$, in which the twisting maps $\alpha_p$ for $p \in \theta$ are all equal to each other.
\end{enumerate}

\section{Examples of Hom-algebras}
\label{sec:examples}

In this section, we present some examples of Hom-algebras and show how these Hom-algebras arise in our general PROP setting.

\subsection{$G$-Hom-associative algebras}
\label{ex:ghomas}

Let $G$ be a subgroup of the symmetric group $\Sigma_3$.  Let
\[
as(x,y,z) = (xy)z - x(yz)
\]
denote the associator.  Then a \emph{$G$-associative algebra} \cite{gr} is an algebra $(A,\mu)$ with multiplication $\mu \colon A^{\otimes 2} \to A$ that satisfies
\[
\sum_{\sigma \in G} \signsigma as \circ \sigma = 0.
\]
In particular:
\begin{enumerate}
\item
An $\{e\}$-associative algebra is exactly an associative algebra.
\item
An $\{Id, (1~2)\}$-associative algebra is exactly a left pre-Lie algebra.
\item
An $\{Id, (2~3)\}$-associative algebra is exactly a right pre-Lie algebra.
\item
An $A_3$-associative algebra with anti-symmetric multiplication is exactly a Lie algebra.
\item
A $\Sigma_3$-associative algebra is exactly a Lie-admissible algebra.
\end{enumerate}

The Hom-type analogue of a $G$-associative algebra is defined as follows.  Define the \emph{Hom-associator} as
\[
as_\alpha(x,y,z) = (xy)\alpha(z) - \alpha(x)(yz) = \left(\mu \circ (\mu \otimes \alpha - \alpha \otimes \mu)\right)(x,y,z).
\]
A \emph{$G$-Hom-associative algebra} \cite{ms1,yau2} is a triple $(A,\mu,\alpha)$, in which $A$ is a $\bk$-module, $\mu \colon A^{\otimes 2} \to A$, and $\alpha \colon A \to A$, that satisfies
\begin{equation}
\label{ghomass}
\sum_{\sigma \in G} \signsigma as_\alpha \circ \sigma = 0.
\end{equation}
In particular:
\begin{enumerate}
\item
A \emph{Hom-associative algebra} is defined as an $\{e\}$-Hom-associative algebra.  The axiom \eqref{ghomass} in this case is
\begin{equation}
\label{homassociativity}
(xy)\alpha(z) = \alpha(x)(yz),
\end{equation}
which is called \emph{Hom-associativity}.
\item
A \emph{Hom-Lie algebra} \cite{hls,ms1} is equivalent to an $A_3$-Hom-associative algebra with an anti-symmetric multiplication $\mu$.  With $\mu$ anti-symmetric, the axiom \eqref{ghomass} in this case is equivalent to
\[
(xy)\alpha(z) + (zx)\alpha(y) + (yz)\alpha(x) = 0,
\]
which is called the \emph{Hom-Jacobi identity}.
\item
A \emph{Hom-Lie admissible algebra} \cite{ms1,yau2} is equivalent to a $\Sigma_3$-Hom-associative algebra.
\end{enumerate}

The PROP (actually an operad suffices) for $G$-associative algebra is
\begin{equation}
\label{asgprop}
\As(G) = \frac{F(\mu)}{\langle R \rangle},
\end{equation}
where $F(\mu)$ is the free PROP generated by an element $\mu$ of biarity $(1,2)$, and $R$ consists of the single element
\[
\sum_{\sigma \in G} \signsigma \left\{\mu \circ (\mu \otimes \unit) \circ \sigma - \mu \circ (\unit \otimes \mu) \circ \sigma\right\} \in F(\mu).
\]
The unit element $\unit$ appears $2|G|$ times in $R$, so the set $I$ consists of $2|G|$ copies of $\unit$.

A $G$-Hom-associative algebra is precisely a Hom-$\As(G)$-algebra of type $\theta_{min}$ (Definition \ref{def:homalg}), where $\theta_{min}$ is the trivial partition \eqref{thetamin} of $I$.  Indeed, a Hom-$\As(G)$-algebra of type $\theta_{min}$ has generating operations
\[
\mu \colon A^{\otimes 2} \to A \andspace \alpha \colon A \to A,
\]
satisfying the generating relation
\[
\sum_{\sigma \in G} \signsigma \left\{\mu \circ (\mu \otimes \alpha) - \mu \circ (\alpha \otimes \mu)\right\} \circ \sigma = 0.
\]
This relation is exactly the relation \eqref{ghomass} that defines $G$-Hom-associative algebra.

\subsection{Variants of Hom-associative algebras}
\label{ex:otherhomas}

As discussed above, a Hom-associative algebra $(A,\mu,\alpha)$ is defined by the Hom-associativity condition \eqref{homassociativity}.  Variations of Hom-associativity have also been studied \cite{fg}.  Here we show how these other types of Hom-associative algebras arise in our general PROP setting.

For example, a \emph{Hom-associative algebra of type $II_1$} \cite{fg} $(A,\mu,\alpha)$ is defined by the axiom
\begin{equation}
\label{II1}
\left(\alpha(x) \alpha(y)\right)z = x \left(\alpha(y) \alpha(z)\right).
\end{equation}
Consider the PROP (actually an operad suffices) for associative algebras
\begin{equation}
\label{asprop}
\As = \frac{F(\mu)}{\langle R \rangle}.
\end{equation}
Here $F(\mu)$ is the free PROP generated by an element $\mu$ of biarity $(1,2)$, and $R$ consists of the single element
\[
as_{II_1} = \mu \circ (\mu \otimes \unit_1) \circ (\unit_2 \otimes \unit_3 \otimes \unit_4) - \mu \circ (\unit_5 \otimes \mu) \circ (\unit_6 \otimes \unit_7 \otimes \unit_8) \in F(\mu)
\]
in which each $\unit_i$ is a copy of the unit element $\unit \in F(\mu)$.  In this case, we have $I = \{\unit_i\}_{i=1}^8$.

We take the subset
\[
S = \{\unit_2, \unit_3, \unit_7, \unit_8\} \subseteq I
\]
and the trivial partition $\theta_{min}$ \eqref{thetamin} of $S$.  Then Hom-$\As$-algebras of type $\theta_{min}$ (Definition \ref{def:homalg}) are exactly the Hom-associative algebras of type $II_1$.  Indeed, a Hom-$\As$-algebras of type $\theta_{min}$ has generating operations
\[
\mu \colon A^{\otimes 2} \to A \andspace \alpha \colon A \to A,
\]
satisfying the generating relation
\[
\mu \circ (\mu \otimes Id) \circ (\alpha \otimes \alpha \otimes Id) - \mu \circ (Id \otimes \mu) \circ (Id \otimes \alpha \otimes \alpha) = 0.
\]
This relation is exactly the relation \eqref{II1} that defines Hom-associative algebras of type $II_1$.

All other types of Hom-associative algebras in \cite{fg} can be obtained similarly.  For examples, a \emph{Hom-associative algebra of type III} $(A,\mu,\alpha)$ is defined by the axiom
\[
\alpha\left((xy)z\right) = \alpha\left(x(yz)\right).
\]
Let $\As$ be as in \eqref{asprop}, except that $R$ consists of the single element
\[
as_{III} = \unit_1 \circ \mu \circ (\mu \otimes \unit_2) - \unit_3 \circ \mu \circ (\unit_4 \otimes \mu) \in F(\mu).
\]
In this case, we have $I = \{\unit_i\}_{i=1}^4$, and we take
\[
S = \{\unit_1, \unit_3\}.
\]
Then Hom-$\As$-algebras of type $\theta_{min}$ are exactly the Hom-associative algebras of type III.

\subsection{Hom-Nambu algebras}
\label{ex:nambu}

For elements $x_l$ in a $\bk$-module $V$, we use the abbreviation
\begin{equation}
\label{xij}
x_{i,j} = (x_i, x_{i+1}, \ldots, x_j)
\end{equation}
whenever $i \leq j$.  If $i > j$, then $x_{i,j}$ denotes the empty sequence.  Likewise, if $f_k \colon V \to W$ are maps, then we use the abbreviation
\[
f_{k,l}(x_{i,j}) = (f_k(x_i), f_{k+1}(x_{i+1}), \ldots , f_l(x_j)),
\]
provided $l-k = j-i \geq 0$; otherwise, $f_{k,l}(x_{i,j})$ denotes the empty sequence.

Fix an integer $n \geq 2$.  An \emph{$n$-ary Nambu-algebra} \cite{filippov,nambu} is a pair $(V,\bracket)$ in which $V$ is a $\bk$-module and $\bracket \colon V^{\otimes n} \to V$ is an $n$-ary operation such that the \emph{$n$-ary Nambu identity}
\begin{equation}
\label{nambuid}
\left[x_{1,n-1}, [y_{1,n}]\right] = \sum_{i=1}^n \left[y_{1,i-1}, [x_{1,n-1}, y_i], y_{i+1,n}\right]
\end{equation}
holds.  The $n$-ary Nambu identity is an $n$-ary generalization of the Jacobi identity, in the sense that it says that
\[
[x_{1,n-1}, -] \colon V \to V
\]
is a derivation with respect to the bracket $\bracket$.

The Hom-type analogue of an $n$-ary Nambu algebra was introduced in \cite{ams3} and studied further in \cite{ams1,ams2,yau7,yau8,yau9}.  An \emph{$n$-ary Hom-Nambu algebra} is a triple $(V,\bracket,\alpha = (\alpha_{1,n-1}))$ in which:
\begin{enumerate}
\item
$V$ is a $\bk$-module, 
\item
$\bracket \colon V^{\otimes n} \to V$ is an $n$-ary operation, and 
\item
each $\alpha_i \colon V \to V$ is a linear map, 
\end{enumerate}
such that the \emph{$n$-ary Hom-Nambu identity}
\begin{equation}
\label{nnambuid}
\left[\alpha_{1,n-1}(x_{1,n-1}), [y_{1,n}]\right]
= \sum_{i=1}^n \left[\alpha_{1,i-1}(y_{1,i-1}), [x_{1,n-1},y_i], \alpha_{i,n-1}(y_{i+1,n})\right]
\end{equation}
holds.

To describe $n$-ary Hom-Nambu algebras in the general PROP context, define
\[
M_i = (i-1)(n-1)
\]
for $i = 1,2,\ldots$.  The PROP (actually an operad suffices) for $n$-ary Nambu algebras is
\begin{equation}
\label{namprop}
\Nam = \frac{F(\mu)}{\langle R \rangle}.
\end{equation}
Here $F(\mu)$ is the free PROP on a generator $\mu$ of biarity $(1,n)$.  The set $R$ consists of the single element
\[
\begin{split}
r &= \mu \circ \left(\unit_{1+M_{n+1},n-1+M_{n+1}} \otimes \mu \right)\\
&- \sum_{i=1}^n \mu \circ \left(\unit_{1+M_i, i-1+M_i} \otimes \mu \otimes \unit_{i+M_i, n-1+M_i}\right) \circ \sigma_i \in F(\mu),
\end{split}
\]
where each $\unit_j$ is a copy of the unit element in $F(\mu)$ and $\sigma_i \in \Sigma_{2n-1}$ is the block permutation
\[
\sigma_i\left(x_{1,n-1}, y_{1,n}\right) = \left(y_{1,i-1}, x_{1,n-1}, y_{i,n}\right).
\]
There are $M_{n+2} = (n+1)(n-1)$ copies of $\unit$ in $R$, so $I = \{\unit_j\}_{j=1}^{(n+1)(n-1)}$.

With $S = I$ we take the partition
\[
\theta = p_1 \sqcup \cdots \sqcup p_{n-1}
\]
of $I$, where
\[
p_j = \left\{\unit_{j+M_1}, \ldots \unit_{j+M_{n+1}}\right\}
\]
for $1 \leq j \leq n-1$.  Then Hom-$\Nam$-algebras of type $\theta$  (Definition \ref{def:homalg}) are exactly the $n$-ary Hom-Nambu algebras.  All other $n$-ary Hom-algebras considered in \cite{ams3} can be obtained similarly in the general PROP context.

\subsection{Hom-bialgebras}
\label{ex:hombi}

Two Hom-type generalizations of bialgebras have been studied \cite{ms4,yau4}.  Recall that a \emph{bialgebra} $(A,\mu,\Delta)$ is a $\bk$-module $A$ with an associative multiplication $\mu \colon A^{\otimes 2} \to A$ and a coassociative comultiplication $\Delta \colon A \to A^{\otimes 2}$, satisfying the compatibility condition
\begin{equation}
\label{bialgcompat}
\Delta \circ \mu = \mu^{\otimes 2} \circ (2 ~ 3) \circ \Delta^{\otimes 2}.
\end{equation}
Here $(2~3) \in \Sigma_4$ is the permutation that switches the middle two entries.

A \emph{Hom-coassociative coalgebra} \cite{ms4} is a triple $(C,\Delta,\alpha)$, in which $C$ is a $\bk$-module and $\Delta \colon C \to C^{\otimes 2}$ and $\alpha \colon C \to C$ are linear maps, such that the \emph{Hom-coassociativity} condition
\[
(\Delta \otimes \alpha) \circ \Delta = (\alpha \otimes \Delta) \circ \Delta
\]
holds.  The last condition is the dual of Hom-associativity \eqref{homassociativity}.

One Hom-type generalization of a bialgebra has one twisting map.  A \emph{Hom-bialgebra} \cite{yau4} is a quadruple $(A,\mu,\Delta,\alpha)$ in which:
\begin{enumerate}
\item
$(A,\mu,\alpha)$ is a Hom-associativity algebra \eqref{homassociativity}.
\item
$(A,\Delta,\alpha)$ is a Hom-coassociative coalgebra.
\item
The compatibility condition \eqref{bialgcompat} holds.
\end{enumerate}

The other Hom-type generalization of a bialgebra has two twisting maps.  A \emph{generalized Hom-bialgebra} \cite{ms4} is a quintuple $(A,\mu,\Delta,\alpha_1,\alpha_2)$ in which:
\begin{enumerate}
\item
$(A,\mu,\alpha_1)$ is a Hom-associativity algebra \eqref{homassociativity}.
\item
$(A,\Delta,\alpha_2)$ is a Hom-coassociative coalgebra.
\item
The compatibility condition \eqref{bialgcompat} holds.
\end{enumerate}

To put (generalized) Hom-bialgebras in the PROP context, let us first consider the PROP for bialgebras:
\begin{equation}
\label{biprop}
\Bi = \frac{F(\mu,\Delta)}{\langle R \rangle}.
\end{equation}
Here $F(\mu,\Delta)$ is the free PROP on two generators $\mu$ and $\Delta$ of biarities $(1,2)$ and $(2,1)$, respectively.  The set
\[
R = \{as, coas, comp\}
\]
consists of the following three elements in $F(\mu,\Delta)$:
\[
\begin{split}
as &= \mu \circ (\mu \otimes \unit_1) - \mu \circ (\unit_2 \otimes \mu),\\
coas &= (\Delta \otimes \unit_3) \circ \Delta - (\unit_4 \otimes \Delta) \circ \Delta,\\
comp &= \Delta \circ \mu - (\mu \otimes \mu) \circ (2~3) \circ (\Delta \otimes \Delta).
\end{split}
\]
The unit element $\unit \in F(\mu,\Delta)$ appears four times in $R$, so $I = \{\unit_i\}_{i=1}^4$.  We take $S = I$.  Then:
\begin{enumerate}
\item
With the trivial partition $\theta_{min}$ \eqref{thetamin} of $I$, Hom-$\Bi$-algebras of type $\theta_{min}$ (Definition \ref{def:homalg}) are exactly the Hom-bialgebras as defined in \cite{yau4}.
\item
On the other hand, consider the partition
\[
\theta = p_1 \sqcup p_2
\]
of $I$ with
\[
p_1 = \{\unit_1, \unit_2\} \andspace p_2 = \{\unit_3, \unit_4\}.
\]
Then Hom-$\Bi$-algebras of type $\theta$ are exactly the generalized Hom-bialgebras as defined in \cite{ms4}.
\end{enumerate}

\subsection{The Hom-Yang-Baxter equation}
\label{ex:hybe}

Let $V$ be a $\bk$-module.  A linear map $B \colon V^{\otimes 2} \to V^{\otimes 2}$ is said to be a solution of the \emph{Yang-Baxter equation} (YBE) if it satisfies
\[
(Id_V \otimes B) \circ (B \otimes Id_V) \circ (Id_V \otimes B) = (B \otimes Id_V) \circ (Id_V \otimes B) \circ (B \otimes Id_V).
\]
If, in addition, $B$ is invertible, then it is called an $R$-matrix.  The YBE was first introduced in statistical mechanics \cite{baxter,baxter2,yang}.  It plays a major role in many subjects, including braid group representations, quantum groups, quantum integrable systems, braided categories, and knot invariants.

The Hom-type generalization of the YBE was introduced and studied in \cite{yau3,yau6}.  Let $\alpha \colon V \to V$ be a linear map.  A linear map $B \colon V^{\otimes 2} \to V^{\otimes 2}$ is said to be a solution of the \emph{Hom-Yang-Baxter equation} (HYBE) for $(V,\alpha)$ if it satisfies
\[
(\alpha \otimes B) \circ (B \otimes \alpha) \circ (\alpha \otimes B) = (B \otimes \alpha) \circ (\alpha \otimes B) \circ (B \otimes \alpha)
\]
and
\[
\alpha^{\otimes 2} \circ B = B \circ \alpha^{\otimes 2}.
\]
Now we describe solutions of the HYBE in our general PROP context.

Consider the PROP
\begin{equation}
\label{ybeprop}
\YBE = \frac{F(B)}{\langle R \rangle},
\end{equation}
where $F(B)$ is the free PROP on a generator $B$ of biarity $(2,2)$, and $R$ consists of the single element
\[
\begin{split}
r &= (\unit \otimes B) \circ (B \otimes \unit) \circ (\unit \otimes B)\\
&- (B \otimes \unit) \circ (\unit \otimes B) \circ (B \otimes \unit) \in F(B).
\end{split}
\]
A $\YBE$-algebra is exactly a pair $(V,B)$ in which $V$ is a $\bk$-module and $B \colon  V^{\otimes 2} \to V^{\otimes 2}$ is a solution of the YBE.  A multiplicative Hom-$\YBE$-algebra (Definition \ref{def:mhomalg}) is exactly a triple $(V,\alpha,B)$ in which $V$ is a $\bk$-module and $B$ is a solution of the HYBE for $(V,\alpha)$.

\subsection{Hom-$A_\infty$-algebras}
\label{ex:homainf}

Here we work over $\bZ$-graded $\bk$-modules.  An \emph{$A_\infty$-algebra} \cite{stasheff} is a tuple $\left(V,\{m_k\}_{k=1}^\infty\right)$ in which $V$ is a $\bZ$-graded $\bk$-module and $m_k \colon V^{\otimes k} \to V$ is a $k$-ary operation of degree $2-k$, such that the identity
\[
\sum_{l=0}^{n-1} \sum_{k=1}^{n-l} (-1)^\epsilon \, m_{n-k+1} \left(x_{1,l} \otimes m_k(x_{l+1,l+k}) \otimes x_{l+k+1,n}\right) = 0
\]
holds for each $n \geq 1$, in which
\[
\epsilon = (k+1)(l+1)-1 + k(n+|x_1| + \cdots + |x_l|).
\]
Here we are using the abbreviation in \eqref{xij}.

We use the $n$-ary Hom-Nambu identity \eqref{nnambuid} as a guide to define the Hom-type generalization of an $A_\infty$-algebra.  First, the PROP (actually an operad suffices) for $A_\infty$-algebras is
\begin{equation}
\label{ainftyprop}
\ainfty = \frac{F\left(\{\mu_k\}_{k=1}^\infty\right)}{\langle R \rangle},
\end{equation}
in which $F\left(\{\mu_k\}_{k=1}^\infty\right)$ is the free PROP on the generators $\mu_k$ of homogeneous degree $2-k$ and biarity $(1,k)$.  The set $R$ consists of the elements
\begin{equation}
\label{ainftyr}
\sum_{l=0}^{n-1} \sum_{k=1}^{n-l} (-1)^\epsilon \, \mu_{n-k+1} \circ \left(\unit_1 \otimes \cdots \otimes \unit_l \otimes \mu_k \otimes \unit_{l+k+1} \otimes \cdots \otimes \unit_{n}\right)
\end{equation}
for all $n \geq 1$.  There are countably infinitely many copies of the unit element $\unit$ in $R$.

With $S = I$ we take the partition
\[
\theta = \bigsqcup_{i=1}^\infty \theta_i
\]
of $I$, where $\theta_i$ consists of the unit elements labeled $\unit_i$ in \eqref{ainftyr}.  Define a \emph{Hom-$A_\infty$-algebra} as a Hom-$\ainfty$-algebra of type $\theta$ (Definition \ref{def:homalg}).  More explicitly, a Hom-$A_\infty$-algebra is a tuple $\left(V, \{m_k\}_{k=1}^\infty, \{\alpha_k\}_{k=1}^\infty\right)$ in which:
\begin{enumerate}
\item
$V$ is a $\bZ$-graded $\bk$-module,
\item
each $m_k \colon V^{\otimes k} \to V$ is a $k$-ary operation of degree $2-k$, and
\item
each $\alpha_k \colon V \to V$ is a degree $0$ linear map,
\end{enumerate}
such that
\[
\sum_{l=0}^{n-1} \sum_{k=1}^{n-l} (-1)^\epsilon \, m_{n-k+1} \left(\alpha_{1,l}(x_{1,l}) \otimes m_k(x_{l+1,l+k}) \otimes \alpha_{l+k+1,n}(x_{l+k+1,n})\right) = 0
\]
holds for each $n \geq 1$.

\subsection{Hom-$L_\infty$-algebras}
\label{ex:homlinf}

We continue to work over $\bZ$-graded $\bk$-modules.  An \emph{$L_\infty$-algebra} \cite{lm} is a tuple $(V,\{l_k\}_{k=1}^\infty)$ in which $V$ is a $\bk$-module and $l_k \colon V^{\otimes k} \to V$ is an anti-symmetric operation of degree $2-k$, such that the condition
\begin{equation}
\label{eq:ln}
\sum_{i+j=n+1} \sum_{\sigma \in \cS_{i,n-i}} \chi(\sigma) (-1)^{i(j-1)} l_j\left(l_i(x_{\sigma(1)}, \ldots , x_{\sigma(i)}), x_{\sigma(i+1)}, \ldots , x_{\sigma(n)}\right) = 0
\end{equation}
holds for each $n \geq 1$.  In \eqref{eq:ln} $\cS_{i,n-i}$ denotes the set of all the $(i,n-i)$-unshuffles for $i \geq 1$, and
\[
\chi(\sigma) = (-1)^\sigma \cdot \varepsilon(\sigma; x_1, \ldots , x_n),
\]
where $(-1)^\sigma$ is the signature of $\sigma$, and $\varepsilon(\sigma; x_1, \ldots , x_n)$ is the Koszul sign determined by
\[
x_1 \wedge \cdots \wedge x_n = \varepsilon(\sigma; x_1, \ldots , x_n) \cdot x_{\sigma(1)} \wedge \cdots \wedge x_{\sigma(n)}.
\]
The anti-symmetry of $l_k$ means that
\[
\chi(\sigma)l_k(x_{\sigma(1)}, \ldots , x_{\sigma(k)}) = l_k(x_1, \ldots , x_k)
\]
for $\sigma \in \Sigma_k$ and $x_1, \ldots , x_k \in V$.

The PROP (actually an operad suffices) for $L_\infty$-algebras is
\begin{equation}
\label{linftyprop}
\linfty = \frac{F(\{\mu_k\}_{k=1}^\infty)}{\langle R \rangle},
\end{equation}
where $F(\{\mu_k\}_{k=1}^\infty)$ is the free PROP on the generators $\mu_k$ of homogeneous degree $2-k$ and biarity $(1,k)$.  The set $R$ consists of the elements
\[
\mu_k - \chi(\sigma) \mu_k \circ \sigma
\]
for all $k \geq 1$ and $\sigma \in \Sigma_k$, and
\begin{equation}
\label{linftyr}
\sum_{i+j=n+1} \sum_{\sigma \in \cS_{i,n-i}} \chi(\sigma) (-1)^{i(j-1)} \mu_j \circ \left(\mu_i \otimes \unit_1 \otimes \cdots \otimes \unit_{n-i}\right) \circ \sigma
\end{equation}
for all $n \geq 1$.  There are countably infinitely many copies of the unit element $\unit$ in $R$.

With $S = I$ we take the partition
\[
\theta = \bigsqcup_{i=1}^\infty \theta_i
\]
of $I$, where $\theta_i$ consists of the unit elements labeled $\unit_i$ in \eqref{linftyr}.  Define a \emph{Hom-$L_\infty$-algebra} as a Hom-$\linfty$-algebra of type $\theta$ (Definition \ref{def:homalg}).  More explicitly, a Hom-$L_\infty$-algebra is a tuple $\left(V, \{l_k\}_{k=1}^\infty, \{\alpha_k\}_{k=1}^\infty\right)$ in which:
\begin{enumerate}
\item
$V$ is a $\bZ$-graded $\bk$-module,
\item
each $l_k \colon V^{\otimes k} \to V$ is an anti-symmetric $k$-ary operation of degree $2-k$, and
\item
each $\alpha_k \colon V \to V$ is a degree $0$ linear map,
\end{enumerate}
such that
\[
\sum_{i+j=n+1} \sum_{\sigma \in \cS_{i,n-i}} \chi(\sigma) (-1)^{i(j-1)} l_j\left(l_i(x_{\sigma(1)}, \ldots , x_{\sigma(i)}), \alpha_1(x_{\sigma(i+1)}), \ldots , \alpha_{n-i}(x_{\sigma(n)})\right) = 0
\]
holds for each $n \geq 1$.

\section{Twisting constructions for Hom-algebras}
\label{sec:twit}

In this section, we provide general construction results for Hom-algebras over a PROP that generalize and unify many such results in the literature.  We also give a partial classification of Hom-algebras.

Throughout this section, let $\sP = FX/\langle R \rangle$ be a PROP as in section \ref{propsetting}.  Let us first introduce some notations.

\begin{definition}
Let $x_p, y_p$, and $z_p$ for $p \geq 1$ be elements in a PROP $\sP = FX/\langle R \rangle$.
\begin{enumerate}
\item
For $i \leq j$ define
\[
x_{i,j} = \bigotimes_{l=i}^j x_l = x_i \otimes x_{i+1} \otimes \cdots \otimes x_j
\]
as the iterated horizontal composition of $x_i, \ldots , x_j$ in the stated order.  This is well-defined because the horizontal composition is associative.
\item
For $i \leq j$ define
\[
\bigo_{l=i}^j y_l = y_i \circ y_{i+1} \circ \cdots \circ y_j
\]
as the iterated vertical composition of $y_i, \ldots , y_j$ in the stated order, provided each vertical composition makes sense.  This is well-defined because the vertical composition is associative.
\item
For positive integers $n_1, n_2, \ldots$, set
\begin{equation}
\label{nj}
N_j = n_1 + \cdots + n_{j-1}
\end{equation}
with $N_1 = 0$.  For $k \geq 1$ by a \emph{degree $k$ monomial}, we mean an element in $\sP$ of the form
\begin{equation}
\label{monomial}
\sigma_0 \circ \left(\bigo_{j=1}^k x_{N_j+1,N_{j+1}} \circ \sigma_j \right)
\end{equation}
in which each $x_i \in X \sqcup \{\unit\}$ and each $\sigma_j$ is a permutation.
\item
An element in $\sP$ is said to be of \emph{homogeneous degree $k$} if it has the form
\[
\sum \pm z_l
\]
in which the sum is finite and non-empty, and each $z_l$ is a degree $k$ monomial.
\item
A PROP $\sP = FX/\langle R \rangle$ is said to be \emph{normal} if each element in $R$ is of homogeneous degree $k$ for some $k \geq 1$.
\end{enumerate}
\end{definition}

\begin{example}
The PROPs $\As(G)$ \eqref{asgprop}, $\As$ \eqref{asprop}, $\Nam$ \eqref{namprop}, $\Bi$ \eqref{biprop}, $\YBE$ \eqref{ybeprop}, $\ainfty$ \eqref{ainftyprop}, and $\linfty$ \eqref{linftyprop} are all normal.
\end{example}

The following result is our main twisting result for Hom-algebras. It says that over normal PROPs, a Hom-algebra gives rise to another one in which all the generating operations are twisted by an endomorphism.  Recall the notations and conventions from section \ref{prophomalg} and Definition \ref{def:homalg}.

\begin{theorem}
\label{thm:twist}
Let $\sP = FX/\langle R \rangle$ be a normal PROP and $\theta$ be an arbitrary partition of $I$.  Suppose
\[
\lambda \colon \sP_{h,\theta} \to \sE_V
\]
is a Hom-$\sP$-algebra of type $\theta$, and $\beta \colon V \to V$ is a morphism of Hom-$\sP$-algebras of type $\theta$.  Then there is another Hom-$\sP$-algebra of type $\theta$
\begin{equation}
\label{lb}
\lambda_\beta \colon \sP_{h,\theta} \to \sE_V
\end{equation}
determined by
\[
\lambda_\beta(y) = \beta^{\otimes q} \circ \lambda(y)
\]
for all $y \in X_{h,\theta}(q,p)$.
\end{theorem}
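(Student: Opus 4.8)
The plan is to build $\lambda_\beta$ through the universal property of $\sP_{h,\theta} = FX_{h,\theta}/\langle R_{h,\theta}\rangle$. The assignment $y\mapsto\beta^{\otimes q}\circ\lambda(y)$ for $y\in X_{h,\theta}(q,p)$ is a morphism of $\Sigma$-bimodules $X_{h,\theta}\to\sE_V$: it is bi-equivariant because $\lambda$ is and because $\beta^{\otimes q}$, acting identically on every tensor factor, commutes with the $\Sigma_q$-action on $\sE_V(q,p)$. By the free--forgetful adjunction it extends to a morphism of PROPs $FX_{h,\theta}\to\sE_V$, which I also denote $\lambda_\beta$. A morphism of PROPs carries the ideal generated by a set into the ideal generated by its image, so to conclude it suffices to show $\lambda_\beta(r') = 0$ for every $r'\in R_{h,\theta}$: then $\lambda_\beta$ descends to a PROP morphism $\sP_{h,\theta}\to\sE_V$, which by construction has the stated values on $X_{h,\theta}$ and hence is the asserted Hom-$\sP$-algebra of type $\theta$. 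I will also use that $\lambda_\beta$ and $\lambda$ agree on permutations, since any morphism of PROPs sends $\sigma\in\Sigma_l$ to the operator permuting the $l$ tensor factors of $V^{\otimes l}$.

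The heart of the argument is the identity
\[
\lambda_\beta(z) = \left(\beta^{\otimes n}\right)^{\circ k}\circ\lambda(z)
\]
for every degree $k$ monomial $z\in FX_{h,\theta}$ of biarity $(n,m)$ whose non-permutation factors all lie in $X_{h,\theta}$ (so none of them is $\unit$). I would deduce it from two observations. First, if $h$ is a horizontal composite of elements of $X_{h,\theta}$, of biarity $(a,b)$, then the interchange rule in $\sE_V$ and the defining formula for $\lambda_\beta$ give $\lambda_\beta(h) = \beta^{\otimes a}\circ\lambda(h)$; moreover, applying the morphism property of $\beta$ (Definition \ref{def:propalgmorphism}) to each factor and using interchange once more gives $\lambda(h)\circ\beta^{\otimes b} = \beta^{\otimes a}\circ\lambda(h)$, whence $\lambda(h)\circ\left(\beta^{\otimes b}\right)^{\circ s} = \left(\beta^{\otimes a}\right)^{\circ s}\circ\lambda(h)$ for all $s\ge 0$. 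Second, every permutation operator commutes with each power $\left(\beta^{\otimes l}\right)^{\circ s}$. Writing $z = \sigma_0\circ(h_1\circ\sigma_1)\circ\cdots\circ(h_k\circ\sigma_k)$ with layers $h_j$ of biarity $(a_j,b_j)$, $b_j = a_{j+1}$ and $a_1 = n$, I would apply the PROP morphism $\lambda_\beta$, replace each $\lambda_\beta(h_j)$ by $\beta^{\otimes a_j}\circ\lambda(h_j)$ and each $\lambda_\beta(\sigma_j)$ by $\lambda(\sigma_j)$, and then migrate the $\beta$-blocks to the far left, one layer at a time for $j = 1,\ldots,k$: by the two observations the block coming from $h_{j+1}$ slides past every permutation operator and, passing successively through $\lambda(h_j),\ldots,\lambda(h_1)$, is converted into $\beta^{\otimes a_j},\ldots,\beta^{\otimes a_1} = \beta^{\otimes n}$, so each of the $k$ layers deposits exactly one more factor of $\beta^{\otimes n}$ on the left, and the remaining operators reassemble into $\lambda(z)$.

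To finish, each $r'\in R_{h,\theta}$ arises from a unique $r\in R$ by replacing every unit $\unit$ in the chosen representation of $r$ by the twisting map $\alpha_p$ indexed by its block. Since $\sP$ is normal, $r$ is a finite sum of degree $k$ monomials for one fixed $k$, all of a common biarity $(n,m)$; and since $S = I$ here (as $\theta$ partitions $I$), \emph{every} unit among the non-permutation factors of those monomials is replaced by a twisting map, so each monomial of $r'$ is a degree $k$ monomial of biarity $(n,m)$ with all non-permutation factors in $X_{h,\theta}$ --- exactly the situation covered by the identity above. Summing it over the monomials of $r'$ gives $\lambda_\beta(r') = \left(\beta^{\otimes n}\right)^{\circ k}\circ\lambda(r')$, and $\lambda(r') = 0$ because $\lambda\colon\sP_{h,\theta}\to\sE_V$ factors through $\sP_{h,\theta} = FX_{h,\theta}/\langle R_{h,\theta}\rangle$; hence $\lambda_\beta(r') = 0$, as required.

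The step I expect to be the main obstacle is the migration in the second paragraph: tracking which arity each $\beta$-block carries and confirming that precisely one extra copy of $\beta^{\otimes n}$ is produced per layer. A related, easily overlooked point --- and the reason normality alone is not enough --- is that the identity genuinely requires the layers to be free of unit factors, because the morphism property of $\beta$ commutes only a \emph{full} $\beta^{\otimes b}$ through a layer, not a partial one; it is exactly the hypothesis $S = I$ that guarantees all units of $R$ have become twisting maps in $R_{h,\theta}$.
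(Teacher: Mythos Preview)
Your proposal is correct and follows essentially the same route as the paper: extend $y\mapsto\beta^{\otimes q}\circ\lambda(y)$ to a PROP morphism on $FX_{h,\theta}$ via the free--forgetful adjunction, and verify it kills $R_{h,\theta}$ by showing that on each degree $k$ monomial of $r'$ the twisted structure map equals $(\beta^{\otimes n})^{\circ k}\circ\lambda(-)$, using interchange and the morphism property of $\beta$ to slide the $\beta$-blocks leftward layer by layer. Your treatment is slightly more explicit than the paper's in two respects---you handle the permutations $\sigma_j$ directly (the paper simply remarks they ``will not affect the computation at all'' and drops them), and you isolate precisely why $S=I$ is needed (so that every non-permutation factor lies in $X_{h,\theta}$ and the commutation $\lambda(h)\circ\beta^{\otimes b}=\beta^{\otimes a}\circ\lambda(h)$ applies to a \emph{full} layer)---but the argument is the same.
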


\begin{proof}
Consider the map
\[
\xi \colon X_{h,\theta} = X \oplus \bk\langle\{\alpha_p\}_{p \in \theta}\rangle \to \sE_V
\]
of $\Sigma$-bimodules given by
\[
\xi(y) = \beta^{\otimes q} \circ \lambda(y)
\]
for $y \in X_{h,\theta}(q,p)$.  Its adjoint is a map
\[
\xi' \colon FX_{h,\theta} \to \sE_V
\]
of PROPs.  To prove the Theorem, it suffices to show that $\xi'$ factors through the quotient map $FX_{h,\theta} \to \sP_{h,\theta}$, i.e., that
\begin{equation}
\label{xir}
\xi'(R_{h,\theta}) = 0.
\end{equation}

By normality a typical element in $R$ is, using the notations in \eqref{nj} and \eqref{monomial}, a finite sum of degree $k$ monomials
\[
r = \sum \pm \sigma_0 \circ \left(\bigo_{j=1}^k x_{N_j+1,N_{j+1}} \circ \sigma_j \right)
\]
for some $k \geq 1$, in which each $x_i = \unit$ or $x_i \in X$.  Therefore, a typical element in $R_{h,\theta}$ is a finite sum
\begin{equation}
\label{r'}
r' = \sum \pm \sigma_0 \circ \left(\bigo_{j=1}^k x'_{N_j+1,N_{j+1}} \circ \sigma_j \right),
\end{equation}
in which
\[
x_i' =
\begin{cases}
x_i & \text{if $x_i \not= \unit$},\\
\alpha_p & \text{if $x_i = \unit \in p \in \theta$}.
\end{cases}
\]
Since $S = I$ we have that each $x_i' \in X_{h,\theta}$.  Suppose the biarity of $x_i'$ is $(q_i,p_i)$.  It follows that
\begin{equation}
\label{xixi}
\xi'(x_i') = \beta^{\otimes q_i} \circ \lambda(x_i').
\end{equation}
We will show that
\[
\xi'(r') = 0,
\]
which immediately implies \eqref{xir}.

Since $x_i'$ has biarity $(q_i,p_i)$, $x'_{N_j+1,N_{j+1}}$ has biarity $(Q_j,P_j)$, where
\[
P_j = \sum_{i=N_j+1}^{N_{j+1}} p_i \andspace Q_j = \sum_{i=N_j+1}^{N_{j+1}} q_i
\]
From the expression of $r'$ \eqref{r'}, it follows that
\begin{equation}
\label{PQ}
P_j = Q_{j+1}
\end{equation}
for $1 \leq j \leq k-1$.  Since $\beta$ is a morphism of $\sP_{h,\theta}$-algebras, we have
\begin{equation}
\label{betalambda}
\begin{split}
\beta^{\otimes Q_j} \circ \left( \bigotimes_{i=N_j+1}^{N_{j+1}} \lambda(x_i')\right)
&= \beta^{\otimes Q_j} \circ \lambda\left(x'_{N_j+1,N_{j+1}}\right)\\
&= \lambda\left(x'_{N_j+1,N_{j+1}}\right) \circ \beta^{\otimes P_j}\\
&= \left(\bigotimes_{i=N_j+1}^{N_{j+1}} \lambda(x_i')\right) \circ \beta^{\otimes Q_{j+1}} \quad\text{(by \eqref{PQ})}
\end{split}
\end{equation}
for $1 \leq j \leq k-1$.  To simplify the typography below, we will omit the summation sign $\Sigma$ and the permutations $\sigma_j$ in $r'$ because they will not affect the computation at all.  We now compute as follows:
\[
\begin{split}
\xi'(r') &= \bigo_{j=1}^k \left(\bigotimes_{i=N_j+1}^{N_{j+1}} \xi'(x_i')\right)\\
&= \bigo_{j=1}^k \left(\bigotimes_{i=N_j+1}^{N_{j+1}} \left\{ \beta^{\otimes q_i} \circ \lambda(x_i')\right\}\right) \quad\text{(by \eqref{xixi})}\\
&= \bigo_{j=1}^k \left(\beta^{\otimes Q_j} \circ \bigotimes_{i=N_j+1}^{N_{j+1}} \lambda(x_i')\right) \quad\text{(by \eqref{interchange} repeatedly)}\\
&= \left(\beta^{\otimes Q_1}\right)^k \circ \bigo_{j=1}^k \left(\bigotimes_{i=N_j+1}^{N_{j+1}} \lambda(x_i')\right) \quad\text{(by \eqref{betalambda} repeatedly)}\\
&= \left(\beta^{\otimes Q_1}\right)^k \circ \lambda(r') = 0.
\end{split}
\]
The last equality holds because $\lambda(R_{h,\theta}) = 0$. This finishes the proof of the Theorem.
\end{proof}

The next result says that the twisting procedure in Theorem \ref{thm:twist} is compatible with morphisms.

\begin{corollary}
\label{cor0:twist}
Let $\sP = FX/\langle R \rangle$ be a normal PROP and $\theta$ be an arbitrary partition of $I$.  Suppose
\[
\lambda \colon \sP_{h,\theta} \to \sE_V \andspace \lambda' \colon \sP_{h,\theta} \to \sE_{V'}
\]
are Hom-$\sP$-algebras of type $\theta$.  Suppose further that $\beta \colon V \to V$, $\beta' \colon V' \to V'$, and $f \colon V \to V'$ are morphisms of Hom-$\sP$-algebras of type $\theta$ such that
\[
f \circ \beta = \beta' \circ f.
\]
Then $f$ is also a morphism when $V$ and $V'$ are equipped with the Hom-$\sP$-algebra of type $\theta$ structures $\lambda_\beta$ and $\lambda'_{\beta'}$ \eqref{lb}, respectively.
\end{corollary}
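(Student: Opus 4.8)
The plan is to verify directly that $f$ intertwines the twisted structure maps $\lambda_\beta$ and $\lambda'_{\beta'}$, using only the defining formula $\lambda_\beta(y) = \beta^{\otimes q}\circ\lambda(y)$ from Theorem \ref{thm:twist}, the hypothesis that $f$ is a morphism for the original structures $\lambda,\lambda'$, and the commutation relation $f\circ\beta = \beta'\circ f$. Since a morphism of $\sP_{h,\theta}$-algebras is checked on the generating operations (Definition \ref{def:propalgmorphism} applied to $\sP_{h,\theta}$, equivalently on all $y \in X_{h,\theta}(q,p)$ by the free--forgetful adjunction of section \ref{genrel}), it suffices to show that for every $y \in X_{h,\theta}(q,p)$ the square
\[
\nicearrow
\xymatrix{
V^{\otimes p} \ar[r]^-{\lambda_\beta(y)} \ar[d]_{f^{\otimes p}} & V^{\otimes q} \ar[d]^{f^{\otimes q}}\\
V'^{\otimes p} \ar[r]^-{\lambda'_{\beta'}(y)} & V'^{\otimes q}
}
\]
commutes.

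First I would expand both composites using the formula for the twisted maps: the top-then-right path is $f^{\otimes q}\circ\beta^{\otimes q}\circ\lambda(y)$, while the left-then-bottom path is $(\beta')^{\otimes q}\circ\lambda'(y)\circ f^{\otimes p}$. Next I would rewrite $f^{\otimes q}\circ\beta^{\otimes q} = (f\circ\beta)^{\otimes q} = (\beta'\circ f)^{\otimes q} = (\beta')^{\otimes q}\circ f^{\otimes q}$, invoking the hypothesis $f\circ\beta = \beta'\circ f$ tensored with itself $q$ times. Finally, since $f$ is a morphism of Hom-$\sP$-algebras of type $\theta$ for the structures $\lambda,\lambda'$, the square of Definition \ref{def:propalgmorphism} gives $f^{\otimes q}\circ\lambda(y) = \lambda'(y)\circ f^{\otimes p}$; substituting this into the previous expression yields $(\beta')^{\otimes q}\circ\lambda'(y)\circ f^{\otimes p}$, which is exactly the other path. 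Hence the square commutes for all generators $y$, and $f$ is a morphism for $\lambda_\beta$ and $\lambda'_{\beta'}$.

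There is essentially no obstacle here; the only point requiring a moment's care is that both $\alpha_p$-type generators and the $X$-type generators are handled uniformly, since the formula $\lambda_\beta(y) = \beta^{\otimes q}\circ\lambda(y)$ applies to every $y \in X_{h,\theta}(q,p)$ regardless of whether $y \in X$ or $y = \alpha_p$ (in the latter case $q = p = 1$), and the argument above never distinguishes the two cases. One should also note that $\lambda_\beta$ and $\lambda'_{\beta'}$ are genuine Hom-$\sP$-algebra structures of type $\theta$ precisely by Theorem \ref{thm:twist} applied to $(\lambda,\beta)$ and to $(\lambda',\beta')$ respectively, so the statement is well-posed. This completes the proof.
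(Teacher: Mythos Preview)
Your proof is correct and follows essentially the same approach as the paper: reduce to checking the morphism condition on generators $y \in X_{h,\theta}(q,p)$, then expand $f^{\otimes q}\circ\lambda_\beta(y)$ and push $f^{\otimes q}$ past $\beta^{\otimes q}$ using $f\circ\beta=\beta'\circ f$, and finally use that $f$ intertwines $\lambda$ and $\lambda'$. Your additional remarks about the uniform treatment of $X$-type and $\alpha_p$-type generators and the well-posedness via Theorem~\ref{thm:twist} are accurate and harmless elaborations.
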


\begin{proof}
First note that it suffices to show that
\begin{equation}
\label{flambday}
f^{\otimes q} \circ \lambda_\beta(y) = \lambda'_{\beta'}(y) \circ f^{\otimes p}
\end{equation}
for all $y \in X_{h,\theta}(q,p)$ because $\lambda_\beta$ and $\lambda'_{\beta'}$ are morphisms of PROPs and because of the interchange rule \eqref{interchange}.  Pick an element $y \in X_{h,\theta}(q,p)$.  To prove \eqref{flambday} we compute as follows:
\[
\begin{split}
f^{\otimes q} \circ \lambda_\beta(y) 
&= f^{\otimes q} \circ \beta^{\otimes q} \circ \lambda(y)\\
&= (f \circ \beta)^{\otimes q} \circ \lambda(y)\\
&= (\beta' \circ f)^{\otimes q} \circ \lambda(y)\\
&= \beta'^{\otimes q} \circ f^{\otimes q} \circ \lambda(y)\\
&= \beta'^{\otimes q} \circ \lambda'(y) \circ f^{\otimes p}\\
&= \lambda'_{\beta'}(y) \circ f^{\otimes p}.
\end{split}
\]
This proves \eqref{flambday}.
\end{proof}

\begin{example}
\begin{enumerate}
\item
Theorem \ref{thm:twist} and Corollary \ref{cor0:twist} can be applied to $G$-Hom-associative algebras (section \ref{ex:ghomas}), $n$-ary Hom-Nambu algebras (section \ref{ex:nambu}), (generalized) Hom-bialgebras (section \ref{ex:hombi}), solutions of the Hom-Yang-Baxter equation (section \ref{ex:hybe}), Hom-$A_\infty$-algebras (section \ref{ex:homainf}), Hom-$L_\infty$-algebras (section \ref{ex:homlinf}), and most other Hom-type algebras in the literature.
\item
Theorem \ref{thm:twist} and Corollary \ref{cor0:twist} do \emph{not} apply to Hom-associative algebras of type $II_1$ or type $III$ (section \ref{ex:otherhomas}) because in those cases $S \not= I$.
\end{enumerate}
\end{example}

We now discuss some special cases of Theorem \ref{thm:twist}.

The following result says that, for a normal PROP $\sP$, each multiplicative Hom-$\sP$-algebra gives rise to a derived sequence of multiplicative Hom-$\sP$-algebras whose generating operations are twisted by the twisting map.  Recall the notations and conventions from section \ref{mhomhalg} and Definition \ref{def:mhomalg}.

\begin{corollary}
\label{cor1:twist}
Let $\sP = FX/\langle R \rangle$ be a normal PROP and
\[
\lambda \colon \sP_{mh} \to \sE_V
\]
be a multiplicative Hom-$\sP$-algebra.  Then for each $n \geq 1$, there is another multiplicative Hom-$\sP$-algebra $\lambda_n \colon \sP_{mh} \to \sE_V$ determined by
\[
\begin{split}
\lambda_n(x) &= \left(\lambda(\alpha)^n\right)^{\otimes q} \circ \lambda(x) \quad\text{for $x \in X(q,p)$},\\
\lambda_n(\alpha) &= \lambda(\alpha)^{n+1}.
\end{split}
\]
\end{corollary}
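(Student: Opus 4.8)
The plan is to obtain $\lambda_n$ as an instance of the general twisting construction, Theorem~\ref{thm:twist}, applied to the twisting map itself. First I would observe that the multiplicative Hom-$\sP$-algebra $\lambda \colon \sP_{mh} \to \sE_V$ restricts, along the PROP map $\pi_1 \colon \sP_{h,\theta_{min}} \to \sP_{mh}$ of Proposition~\ref{hompalg} (which is available since here $S = I$), to a Hom-$\sP$-algebra of type $\theta_{min}$ on $V$: the set $R_{h,\theta_{min}}$ is precisely the collection of type~(2) elements of $R_{mh}$, and under this restricted structure the single twisting map is $\lambda(\alpha)$ and the generating operations are the $\lambda(x)$ for $x \in X$. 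The only extra content carried by $\sP_{mh}$ beyond $\sP_{h,\theta_{min}}$ is the family of $\alpha$-compatibility relations of type~(1), i.e.\ the commuting squares \eqref{alphacompat}.

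Next I would check that $\beta := \lambda(\alpha)^n \colon V \to V$ is a morphism of Hom-$\sP$-algebras of type $\theta_{min}$ from $(V,\lambda)$ to itself. By Definition~\ref{def:propalgmorphism} together with the interchange rule \eqref{interchange}, it suffices to verify that $\beta$ intertwines the generating operations of the type-$\theta_{min}$ structure: commutation of $\beta$ with the twisting map $\lambda(\alpha)$ is immediate, and commutation of $\beta$ with each $\lambda(x)$, $x \in X(q,p)$, follows by iterating the square \eqref{alphacompat} $n$ times, which gives $\beta^{\otimes q} \circ \lambda(x) = \lambda(x) \circ \beta^{\otimes p}$. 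Theorem~\ref{thm:twist}, applied with partition $\theta_{min}$ and this endomorphism $\beta$, then produces a Hom-$\sP$-algebra of type $\theta_{min}$ on $V$ whose structure map sends $y \in X_{h,\theta_{min}}(q,p)$ to $\beta^{\otimes q} \circ \lambda(y)$; reading off the values at $y = x \in X(q,p)$ and at the twisting generator recovers exactly the asserted formulas $\lambda_n(x) = (\lambda(\alpha)^n)^{\otimes q} \circ \lambda(x)$ and $\lambda_n(\alpha) = \beta \circ \lambda(\alpha) = \lambda(\alpha)^{n+1}$.

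It then remains only to promote this type-$\theta_{min}$ structure to a bona fide $\sP_{mh}$-algebra, i.e.\ to check that the type~(1) relations of $R_{mh}$ are annihilated by the twisted structure map --- equivalently, that $\lambda_n(\alpha) = \lambda(\alpha)^{n+1}$ is compatible, in the sense of \eqref{alphacompat}, with each $\lambda_n(x)$. This is a short computation: since $\lambda(\alpha)$ commutes with $\lambda(x)$ by the original compatibility, sliding the copies of $\lambda(\alpha)$ past $\lambda(x)$ gives
\[
\bigl(\lambda(\alpha)^{n+1}\bigr)^{\otimes q} \circ \bigl(\lambda(\alpha)^{n}\bigr)^{\otimes q} \circ \lambda(x) = \bigl(\lambda(\alpha)^{n}\bigr)^{\otimes q} \circ \lambda(x) \circ \bigl(\lambda(\alpha)^{n+1}\bigr)^{\otimes p},
\]
which is exactly $\lambda_n(\alpha)^{\otimes q} \circ \lambda_n(x) = \lambda_n(x) \circ \lambda_n(\alpha)^{\otimes p}$. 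I do not anticipate genuine difficulty here; the one point that needs care is the bookkeeping remark that Theorem~\ref{thm:twist} already takes care of all the deformed (type~(2)) relations, so that the whole additional burden of multiplicativity reduces to this single $\alpha$-compatibility check, after which the corollary is immediate. (A direct proof mimicking that of Theorem~\ref{thm:twist} inside $\sP_{mh}$, using that $\lambda(\alpha)$ commutes with everything, is also possible, but the above reduction is shorter.)
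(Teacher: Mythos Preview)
Your proposal is correct and follows essentially the same route as the paper: pull back along $\pi_1$ to view $V$ as a Hom-$\sP$-algebra of type $\theta_{min}$, apply Theorem~\ref{thm:twist} with $\beta = \lambda(\alpha)^n$, and then observe that the resulting type-$\theta_{min}$ structure factors through $\sP_{mh}$ because $\lambda(\alpha)^n$ is compatible with each $\lambda(x)$ via \eqref{alphacompat}. You have merely spelled out more explicitly the verifications (that $\beta$ is a morphism and that the type~(1) relations hold for the twisted structure) which the paper compresses into a single sentence.
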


\begin{proof}
Take the trivial partition $\theta_{min}$ \eqref{thetamin} of $I$, and regard $V$ as a Hom-$\sP$-algebra of type $\theta_{min}$ via the composition
\[
\sP_{h,\theta_{min}} \xrightarrow{\pi_1}  \sP_{mh} \xrightarrow{\lambda} \sE_V,
\]
in which $\pi_1$ is the map in Proposition \ref{hompalg}.
The twisting map $\lambda(\alpha) \colon V \to V$ is a morphism of $\sP_{h,\theta_{min}}$-algebras, and hence so is $\lambda(\alpha)^n$ for each $n \geq 1$.  Now apply Theorem \ref{thm:twist} to $\beta = \lambda(\alpha)^n$.  Observe that $\lambda_\beta \colon \sP_{h,\theta_{min}} \to \sE_V$ factors through $\pi_1$ because $\lambda(\alpha)^n$ is compatible with all the $\lambda(x)$ for $x \in X$ \eqref{alphacompat}.  The resulting map $\sP_{mh} \to \sE_V$ is the desired map $\lambda_n$.
\end{proof}

The following result says that, given an algebra $V$ over a normal PROP $\sP$ and a morphism on $V$, there is a corresponding multiplicative Hom-$\sP$-algebra structure on $V$ with that morphism as the twisting map.  Such a twisting construction result for Hom-algebras first appeared in \cite{yau2}, where $G$-Hom-associative algebras were considered.

\begin{corollary}
\label{cor2:twist}
Let $\sP = FX/\langle R \rangle$ be a normal PROP, $\lambda \colon \sP \to \sE_V$ be a $\sP$-algebra, and $\beta \colon V \to V$ be a morphism of $\sP$-algebras.  Then there is a multiplicative Hom-$\sP$-algebra
\[
\lambda_\beta \colon \sP_{mh} \to \sE_V
\]
determined by
\[
\begin{split}
\lambda_\beta(x) &= \beta^{\otimes q} \circ \lambda(x) \quad\text{for $x \in X(q,p)$},\\
\lambda_\beta(\alpha) &= \beta.
\end{split}
\]
\end{corollary}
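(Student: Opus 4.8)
The plan is to deduce this from Theorem~\ref{thm:twist}, following almost verbatim the argument used to prove Corollary~\ref{cor1:twist}; the only change is that we now begin with a genuine $\sP$-algebra $\lambda\colon\sP\to\sE_V$ rather than with a multiplicative Hom-$\sP$-algebra. First I would take the trivial partition $\theta_{min}$ \eqref{thetamin} of $I$ and use Proposition~\ref{hompalg} to regard $V$ as a Hom-$\sP$-algebra of type $\theta_{min}$ via the composition
\[
\sP_{h,\theta_{min}} \xrightarrow{\ \pi\ } \sP \xrightarrow{\ \lambda\ } \sE_V ,
\]
so that its single twisting map $\lambda(\alpha_p)$ is the identity map on $V$.

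Next I would observe that $\beta$, being a morphism of $\sP$-algebras, is automatically a morphism of Hom-$\sP$-algebras of type $\theta_{min}$ for this structure. Since $\sP_{h,\theta_{min}}$ is generated by $X$ together with the twisting generator $\alpha_p$, it suffices, by Definition~\ref{def:propalgmorphism}, to verify the commuting-square condition on generators: on $x\in X(q,p)$ it is precisely $\beta^{\otimes q}\circ\lambda(x)=\lambda(x)\circ\beta^{\otimes p}$, which holds by hypothesis, and on $\alpha_p$ it reduces to $\beta\circ Id = Id\circ\beta$, which is trivial. Applying Theorem~\ref{thm:twist} to this $\beta$ then produces a Hom-$\sP$-algebra of type $\theta_{min}$, $\lambda_\beta\colon\sP_{h,\theta_{min}}\to\sE_V$, with $\lambda_\beta(y)=\beta^{\otimes q}\circ\lambda(y)$ for $y\in X_{h,\theta_{min}}(q,p)$; in particular $\lambda_\beta(x)=\beta^{\otimes q}\circ\lambda(x)$ for $x\in X(q,p)$ and $\lambda_\beta(\alpha_p)=\beta\circ Id=\beta$.

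Finally I would show that $\lambda_\beta$ factors through the (surjective) map $\pi_1\colon\sP_{h,\theta_{min}}\to\sP_{mh}$ of Proposition~\ref{hompalg}. Because the only generating relations of $\sP_{mh}$ not already present in $\sP_{h,\theta_{min}}$ are the compatibility relations $x\circ\alpha^{\otimes m}-\alpha^{\otimes n}\circ x$, this reduces to checking that the square \eqref{alphacompat} commutes for $\lambda_\beta$, i.e.\ that for every $x\in X(q,p)$
\[
\beta^{\otimes q}\circ\lambda_\beta(x)
= \beta^{\otimes q}\circ\beta^{\otimes q}\circ\lambda(x)
= \beta^{\otimes q}\circ\lambda(x)\circ\beta^{\otimes p}
= \lambda_\beta(x)\circ\beta^{\otimes p},
\]
the middle equality once more using that $\beta$ is a morphism of $\sP$-algebras. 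The resulting map $\sP_{mh}\to\sE_V$, which by surjectivity of $\pi_1$ is unique, is the desired multiplicative Hom-$\sP$-algebra, and its generating operations are $\lambda_\beta(x)=\beta^{\otimes q}\circ\lambda(x)$ and $\lambda_\beta(\alpha)=\beta$ as claimed. Everything here is a routine transcription of the proof of Corollary~\ref{cor1:twist}; the only step that needs a genuine (if short) computation, and hence the main obstacle, is this last factorization, where one must invoke the morphism property of $\beta$.
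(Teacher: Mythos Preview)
Your proof is correct and follows essentially the same route as the paper's: regard $V$ as a Hom-$\sP$-algebra of type $\theta_{min}$ via $\lambda\circ\pi$, check that $\beta$ is a morphism of $\sP_{h,\theta_{min}}$-algebras, apply Theorem~\ref{thm:twist}, and then factor through $\pi_1$ using the morphism property of $\beta$. You have simply spelled out in more detail the two verifications (that $\beta$ is a $\sP_{h,\theta_{min}}$-morphism and that the compatibility squares \eqref{alphacompat} hold for $\lambda_\beta$) that the paper states without computation.
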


\begin{proof}
Take the trivial partition $\theta_{min}$ \eqref{thetamin} of $I$, and regard $V$ as a Hom-$\sP$-algebra of type $\theta_{min}$ via the composition
\[
\sP_{h,\theta_{min}} \xrightarrow{\pi}  \sP \xrightarrow{\lambda} \sE_V,
\]
in which $\pi$ is the map in Proposition \ref{hompalg}.  The map $\beta$ is then a morphism of $\sP_{h,\theta_{min}}$-algebras.  Now apply Theorem \ref{thm:twist}.  Observe that $\lambda_\beta \colon \sP_{h,\theta_{min}} \to \sE_V$ factors through $\pi_1 \colon \sP_{h,\theta_{min}} \to \sP_{mh}$ because $\beta$ is a morphism of $\sP$-algebras.  The resulting map $\sP_{mh} \to \sE_V$ is the desired map.
\end{proof}

The next result is another version of Corollary \ref{cor2:twist}.

\begin{corollary}
\label{cor3:twist}
Let $\sP = FX/\langle R \rangle$ be a normal PROP, $\lambda \colon \sP \to \sE_V$ be a $\sP$-algebra, $\beta \colon V \to V$ be a morphism of $\sP$-algebras, and $\theta$ be an arbitrary partition of $I$.  Then there is a Hom-$\sP$-algebra of type $\theta$
\begin{equation}
\label{twistedhomalg}
\lambda_\beta \colon \sP_{h,\theta} \to \sE_V
\end{equation}
determined by
\[
\begin{split}
\lambda_\beta(x) &= \beta^{\otimes q} \circ \lambda(x) \quad\text{for $x \in X(q,p)$},\\
\lambda_\beta(\alpha_p) &= \beta \quad\text{for $p \in \theta$}.
\end{split}
\]
\end{corollary}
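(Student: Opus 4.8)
The plan is to obtain $\lambda_\beta$ as a direct instance of Theorem \ref{thm:twist}, in the same spirit as Corollary \ref{cor2:twist} but now allowing a general partition $\theta$ of $I$, so that no multiplicativity demand is placed on $\beta$. First I would promote the $\sP$-algebra $\lambda$ to a Hom-$\sP$-algebra of type $\theta$. Since $\theta$ is a partition of $I$ we have $S = I$, so Proposition \ref{hompalg} supplies the map of PROPs $\pi \colon \sP_{h,\theta} \to \sP$ with $\pi|_X = Id$ and $\pi(\alpha_p) = \unit$ for all $p \in \theta$. The composite
\[
\sP_{h,\theta} \xrightarrow{\pi} \sP \xrightarrow{\lambda} \sE_V
\]
then makes $V$ a Hom-$\sP$-algebra of type $\theta$ whose twisting maps are all equal to $Id_V$, because $\lambda(\pi(\alpha_p)) = \lambda(\unit) = Id_V$.

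Next I would verify that $\beta$ is a morphism of Hom-$\sP$-algebras of type $\theta$ for this structure. Since the structure maps are morphisms of PROPs and by the interchange rule \eqref{interchange} (cf.\ the opening of the proof of Corollary \ref{cor0:twist}), it suffices to check the intertwining relation on the generators of $X_{h,\theta}$. On the elements of $X$ this relation is exactly the hypothesis that $\beta$ is a morphism of $\sP$-algebras; on each $\alpha_p$, both structure maps send $\alpha_p$ to $Id_V$, so the relation becomes $\beta \circ Id_V = Id_V \circ \beta$, which is automatic.

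Then I would apply Theorem \ref{thm:twist} to this Hom-$\sP$-algebra of type $\theta$ and this $\beta$; normality of $\sP$ is precisely the hypothesis that makes the theorem available. It produces a Hom-$\sP$-algebra of type $\theta$, $\lambda_\beta \colon \sP_{h,\theta} \to \sE_V$, determined by $\lambda_\beta(y) = \beta^{\otimes q} \circ (\lambda \circ \pi)(y)$ for $y \in X_{h,\theta}(q,p)$. Unwinding this on generators finishes the proof: for $x \in X(q,p)$ we have $\pi(x) = x$, hence $\lambda_\beta(x) = \beta^{\otimes q} \circ \lambda(x)$; and since $\pi(\alpha_p) = \unit$ has biarity $(1,1)$, $\lambda_\beta(\alpha_p) = \beta \circ \lambda(\unit) = \beta$, as asserted.

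I do not expect any real obstacle: the argument is bookkeeping on top of Proposition \ref{hompalg} and Theorem \ref{thm:twist}. The one point worth keeping in mind is that, in contrast with Corollary \ref{cor2:twist}, one does \emph{not} and in general cannot factor $\lambda_\beta$ through $\pi_1 \colon \sP_{h,\theta} \to \sP_{mh}$, since such a factorization would require $\beta$ to be compatible with all the generating operations $\lambda(x)$ in the sense of \eqref{alphacompat}, which is neither assumed nor needed for a Hom-$\sP$-algebra of type $\theta$.
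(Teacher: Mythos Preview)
Your core argument is correct: promoting $\lambda$ to a Hom-$\sP$-algebra of type $\theta$ via $\pi$ and then applying Theorem~\ref{thm:twist} to $\beta$ does yield the stated $\lambda_\beta$. The paper, however, takes a shorter route: it simply composes the multiplicative Hom-$\sP$-algebra $\lambda_\beta \colon \sP_{mh} \to \sE_V$ already produced in Corollary~\ref{cor2:twist} with the map $\pi_1 \colon \sP_{h,\theta} \to \sP_{mh}$ from Proposition~\ref{hompalg}. Your approach re-runs the twisting theorem in the type-$\theta$ setting, while the paper recycles the multiplicative case and then collapses all the $\alpha_p$ to $\alpha$ via $\pi_1$; both give the same map on generators.

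Your closing remark, though, is mistaken and in fact contradicts the paper's proof. You say one ``in general cannot'' factor $\lambda_\beta$ through $\pi_1$ because that would require $\beta$ to be compatible with the $\lambda(x)$ in the sense of \eqref{alphacompat}. But that compatibility is precisely the hypothesis that $\beta$ is a morphism of $\sP$-algebras: for $x \in X(q,p)$ one has $\beta^{\otimes q} \circ \lambda(x) = \lambda(x) \circ \beta^{\otimes p}$, and from this it follows immediately that $\beta$ commutes with the twisted operations $\lambda_\beta(x) = \beta^{\otimes q}\circ\lambda(x)$ as well. Hence the multiplicativity relations in $R_{mh}$ \emph{are} satisfied, the factorization through $\sP_{mh}$ exists, and that is exactly how the paper obtains the corollary in one line.
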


\begin{proof}
The desired map $\lambda_\beta$ is the composition of the one in Corollary \ref{cor2:twist} and the map $\pi_1 \colon \sP_{h,\theta} \to \sP_{mh}$ in Proposition \ref{hompalg}.
\end{proof}

In the next two results, we give partial classification of Hom-$\sP$-algebras of the form \eqref{twistedhomalg}.

\begin{corollary}
\label{cor:classification}
Let $\sP = FX/\langle R \rangle$ be a normal PROP and $\theta$ be an arbitrary partition of $I$.  Let $\lambda \colon \sP \to \sE_V$ and $\lambda' \colon \sP \to \sE_{V'}$ be $\sP$-algebras, and let $\beta \colon V \to V$ and $\beta' \colon V ' \to V'$ be morphisms of $\sP$-algebras such that $\beta'^{\otimes q}$ is injective whenever $X(q,p) \not= 0$.  Then the following two statements are equivalent:
\begin{enumerate}
\item
The Hom-$\sP$-algebras of type $\theta$
\[
\lambda_\beta \colon \sP_{h,\theta} \to \sE_V \andspace
\lambda'_{\beta'} \colon \sP_{h,\theta} \to \sE_{V'}
\]
as in \eqref{twistedhomalg} are isomorphic.
\item
There is an isomorphism $\gamma \colon V \to V'$ of $\sP$-algebras such that $\gamma \circ \beta = \beta' \circ \gamma$.
\end{enumerate}
\end{corollary}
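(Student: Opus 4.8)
The plan is to prove the two implications separately; the injectivity hypothesis on $\beta'^{\otimes q}$ is needed only for the direction $(1)\Rightarrow(2)$, and everything else is a matter of unwinding Definitions \ref{def:propalgmorphism} and \ref{def:homalg} and reusing Corollary \ref{cor0:twist}.

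\textbf{$(2)\Rightarrow(1)$.} First I would recall, via the map $\pi$ of Proposition \ref{hompalg}, that $\lambda$ and $\lambda'$ may be regarded as Hom-$\sP$-algebras of type $\theta$ whose twisting maps $\alpha_p$ all act as the identity; a direct check on the generators $X\oplus\bk\langle\{\alpha_p\}_{p\in\theta}\rangle$ then shows that the twisted structures $\lambda_\beta$ and $\lambda'_{\beta'}$ of \eqref{twistedhomalg} are precisely the outputs of Theorem \ref{thm:twist} applied to $\lambda\circ\pi$ and $\lambda'\circ\pi$ with twisting endomorphisms $\beta$ and $\beta'$, respectively. Note also that a morphism of $\sP$-algebras is automatically a morphism of the associated type-$\theta$ Hom-$\sP$-algebras, since the latter have identity twisting maps. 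Now given an isomorphism $\gamma\colon V\to V'$ of $\sP$-algebras with $\gamma\circ\beta=\beta'\circ\gamma$, Corollary \ref{cor0:twist} applied with $f=\gamma$ shows that $\gamma\colon(V,\lambda_\beta)\to(V',\lambda'_{\beta'})$ is a morphism of Hom-$\sP$-algebras of type $\theta$. Since $\gamma$ is a linear isomorphism, $\gamma^{-1}$ is again a morphism of $\sP$-algebras, and $\gamma\circ\beta=\beta'\circ\gamma$ rearranges to $\gamma^{-1}\circ\beta'=\beta\circ\gamma^{-1}$; a second application of Corollary \ref{cor0:twist}, now with $f=\gamma^{-1}$, shows $\gamma^{-1}$ is also a morphism between the twisted structures. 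Hence $\gamma$ is an isomorphism of Hom-$\sP$-algebras of type $\theta$, which is $(1)$.

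\textbf{$(1)\Rightarrow(2)$.} Suppose $\gamma\colon V\to V'$ is an isomorphism from $(V,\lambda_\beta)$ to $(V',\lambda'_{\beta'})$ of Hom-$\sP$-algebras of type $\theta$. Exactly as in the proof of Corollary \ref{cor0:twist}, since $\sP_{h,\theta}=FX_{h,\theta}/\langle R_{h,\theta}\rangle$ and $\lambda_\beta,\lambda'_{\beta'}$ are morphisms of PROPs, the compatibility square for $\gamma$ need only be checked on the generators $X_{h,\theta}=X\oplus\bk\langle\{\alpha_p\}_{p\in\theta}\rangle$, so I would read it off generator by generator. On $\alpha_p$, which has biarity $(1,1)$, it gives $\gamma\circ\lambda_\beta(\alpha_p)=\lambda'_{\beta'}(\alpha_p)\circ\gamma$, i.e.\ $\gamma\circ\beta=\beta'\circ\gamma$, which is the equivariance statement in $(2)$. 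On $x\in X(q,p)$ (there is nothing to check when $X(q,p)=0$) it gives $\gamma^{\otimes q}\circ\beta^{\otimes q}\circ\lambda(x)=\beta'^{\otimes q}\circ\lambda'(x)\circ\gamma^{\otimes p}$; rewriting the left side via $\gamma^{\otimes q}\circ\beta^{\otimes q}=(\gamma\circ\beta)^{\otimes q}=(\beta'\circ\gamma)^{\otimes q}=\beta'^{\otimes q}\circ\gamma^{\otimes q}$ and then cancelling $\beta'^{\otimes q}$, which is injective because $X(q,p)\neq0$, yields $\gamma^{\otimes q}\circ\lambda(x)=\lambda'(x)\circ\gamma^{\otimes p}$. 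Thus $\gamma$ is a morphism of $\sP$-algebras, hence (being bijective) an isomorphism of $\sP$-algebras, and together with $\gamma\circ\beta=\beta'\circ\gamma$ this establishes $(2)$.

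The only genuinely delicate point is the cancellation of $\beta'^{\otimes q}$ in the last step: this is precisely where the hypothesis that $\beta'^{\otimes q}$ be injective whenever $X(q,p)\neq0$ enters, and I expect no obstacle elsewhere.
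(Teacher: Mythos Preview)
Your proof is correct and matches the paper's argument essentially step for step in the direction $(1)\Rightarrow(2)$: the same generator-by-generator check, the same use of $\gamma\circ\beta=\beta'\circ\gamma$ to rewrite $(\gamma\circ\beta)^{\otimes q}$ as $\beta'^{\otimes q}\circ\gamma^{\otimes q}$, and the same cancellation of the injective $\beta'^{\otimes q}$. For $(2)\Rightarrow(1)$ the paper simply says ``the converse is proved by essentially the same argument,'' whereas you package that direction as an application of Corollary~\ref{cor0:twist}; this is a clean way to avoid repeating the generator computation, but it is the same content.
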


\begin{proof}
Assume the first statement is true, and pick an isomorphism $\gamma \colon V \to V'$ of Hom-$\sP$-algebras of type $\theta$.  For any $p \in \theta$, we have
\begin{equation}
\label{gammabetacommute}
\begin{split}
\gamma \circ \beta &= \gamma \circ \lambda_\beta(\alpha_p)\\
&= \lambda'_{\beta'}(\alpha_p) \circ \gamma\\
&= \beta' \circ \gamma.
\end{split}
\end{equation}
To show that $\gamma$ is a morphism of $\sP$-algebras, note that it suffices to show that
\begin{equation}
\label{gammalambdax}
\gamma^{\otimes q} \circ \lambda(x) = \lambda'(x) \circ \gamma^{\otimes p}
\end{equation}
for all $x \in X(q,p)$ because $\lambda$ and $\lambda'$ are morphisms of PROPs and because of the interchange rule \eqref{interchange}.  For $0 \not= x \in X(q,p)$, we compute as follows:
\[
\begin{split}
\beta'^{\otimes q} \circ \gamma^{\otimes q} \circ \lambda(x) &= (\beta' \circ \gamma)^{\otimes q} \circ \lambda(x)\\
&= (\gamma \circ \beta)^{\otimes q} \circ \lambda(x) \quad\text{(by \eqref{gammabetacommute})}\\
&= \gamma^{\otimes q} \circ \beta^{\otimes q} \circ \lambda(x)\\
&= \gamma^{\otimes q} \circ \lambda_\beta(x)\\
&= \lambda'_{\beta'}(x) \circ \gamma^{\otimes p}\\
&= \beta'^{\otimes q} \circ \lambda'(x) \circ \gamma^{\otimes p}.
\end{split}
\]
The above computation and the injectivity assumption on $\beta'^{\otimes q}$ together imply \eqref{gammalambdax}. Therefore, the first statement implies the second one.  The converse is proved by essentially the same argument.
\end{proof}

For a $\sP$-algebra $V$, let $\Aut_{\sP}(V)$ denote the group of $\sP$-algebra automorphisms on $V$.

\begin{corollary}
\label{cor2:classification}
Let $\sP = FX/\langle R \rangle$ be a normal PROP, $\lambda \colon \sP \to \sE_V$ be a $\sP$-algebra, and $\theta$ be an arbitrary partition of $I$.  Then there is a canonical bijection between the following two sets:
\begin{enumerate}
\item
The set of isomorphism classes of Hom-$\sP$-algebras of type $\theta$ of the form $\lambda_\beta$ \eqref{twistedhomalg} in which $\beta$ is a $\sP$-algebra automorphism.
\item
The set of conjugacy classes in the group $\Aut_{\sP}(V)$.
\end{enumerate}
\end{corollary}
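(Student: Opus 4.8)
The plan is to deduce this directly from Corollary \ref{cor:classification}, applied in the special case $V = V'$ and $\lambda = \lambda'$. I would send a $\sP$-algebra automorphism $\beta \in \Aut_{\sP}(V)$ to the isomorphism class of the Hom-$\sP$-algebra of type $\theta$
\[
\lambda_\beta \colon \sP_{h,\theta} \to \sE_V
\]
furnished by Corollary \ref{cor3:twist} (equation \eqref{twistedhomalg}); this is legitimate since an automorphism of the $\sP$-algebra $V$ is in particular a morphism of $\sP$-algebras. The assignment $\beta \mapsto [\lambda_\beta]$ is surjective onto the set described in item (1) essentially by the definition of that set.

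Next I would observe that if $\beta'$ is an automorphism of the $\sP$-algebra $V$, then $\beta'^{\otimes q} \colon V^{\otimes q} \to V^{\otimes q}$ is an isomorphism, in particular injective, so the standing hypothesis of Corollary \ref{cor:classification} holds automatically whenever $X(q,p) \not= 0$. Therefore Corollary \ref{cor:classification}, with $V' = V$, $\lambda' = \lambda$, and $\beta, \beta' \in \Aut_{\sP}(V)$, shows that $\lambda_\beta$ and $\lambda_{\beta'}$ are isomorphic as Hom-$\sP$-algebras of type $\theta$ if and only if there is an automorphism $\gamma \in \Aut_{\sP}(V)$ with $\gamma \circ \beta = \beta' \circ \gamma$, i.e.\ $\beta' = \gamma \beta \gamma^{-1}$. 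Hence $[\lambda_\beta] = [\lambda_{\beta'}]$ precisely when $\beta$ and $\beta'$ are conjugate in $\Aut_{\sP}(V)$.

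Combining these two observations, the assignment $\beta \mapsto [\lambda_\beta]$ descends to a well-defined injection from the set of conjugacy classes in $\Aut_{\sP}(V)$ into the set in item (1), and that injection is also surjective; this is the asserted bijection, and it is canonical because it is defined without any auxiliary choices. I do not anticipate a genuine obstacle here: the substantive content is entirely contained in Corollary \ref{cor:classification}, and the only point worth flagging is that the injectivity hypothesis on $\beta'^{\otimes q}$ in that corollary is automatic once $\beta'$ is assumed to be an automorphism rather than merely an endomorphism, which is exactly why restricting to automorphisms turns the equivalence of Corollary \ref{cor:classification} into a clean bijection.
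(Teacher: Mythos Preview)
Your proposal is correct and follows exactly the paper's approach: the paper's entire proof reads ``This is the special case of Corollary \ref{cor:classification} with $V = V'$ and $\lambda = \lambda'$,'' and you have simply spelled out the details of why that specialization yields the asserted bijection, including the observation that the injectivity hypothesis on $\beta'^{\otimes q}$ is automatic for automorphisms.
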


\begin{proof}
This is the special case of Corollary \ref{cor:classification} with $V = V'$ and $\lambda = \lambda'$.
\end{proof}

Corollary \ref{cor2:classification} implies that a single  $\sP$-algebra usually gives rise to many isomorphism classes of Hom-$\sP$-algebras of the form \eqref{twistedhomalg}.  For example, when $\sP$ is the PROP for Lie algebras, Corollary \ref{cor2:classification} recovers Corollary 3.2 in \cite{yau6}.  Using this partial classification result, it is observed in \cite{yau6} that the $3$-dimensional Heisenberg algebra, the $(1+1)$-Poincar\'{e} algebra, and $\sltwo$ each has uncountably many associated isomorphism classes of Hom-Lie algebras.


\end{document}